\setlist{leftmargin=2cm}
\DeclareMathAlphabet{\mymathbb}{U}{bbold}{m}{n}
\newtheorem{dfn}{Definition}[section]
\newtheorem{thm}[dfn]{Theorem}
\newtheorem{claim}[dfn]{Claim}
\newtheorem{corollary}[dfn]{Corollary}
\newtheorem{conjecture}[dfn]{Conjecture}
\newcommand{\prf}{\noindent{\textit{Proof:}}}
\newcommand{\clm}{\noindent{\textit {Claim:}}}
\begin{document}

\title{Coloring one-headed directed hypergraphs }
\author{Balázs István Szabó\thanks{Research supported by the EXCELLENCE-24 project no.~151504 Combinatorics and Geometry of the NRDI Fund.}}
\affil{ELTE E\"{o}tv\"{o}s Lor\'{a}nd University, Budapest}

\maketitle
\begin{abstract}{ A directed hypergraph is a hypergraph in which the vertex set of each hyperedge is partitioned into two disjoint parts, a head and a tail. Keszegh and Pálvölgyi posed the following conjecture. Let $H$ be a directed hypergraph such that in every hyperedge the number of head-vertices is less than the number of tail-vertices and assume that for every pair of hyperedges $e_{1},e_{2}\in E(H)$ with $|e_{1}\cap e_{2}|=1$, the common vertex is a head-vertex in at least one of the hyperedges. Then $H$ admits a proper 2-coloring. Keszegh showed that the conjecture is also true in the special case of 3-uniform hypergraphs \cite{2}. A directed hypergraph is called one-headed if every hyperedge has exactly one head-vertex. The main result of this paper is that the conjecture is true for one-headed directed hypergraphs with all hyperedges having size at least three.
	
Directed 3-uniform hypergraphs such that in every hyperedge the number of head-vertices is one and the number of tail-vertices is two are called $2\rightarrow 1$ hypergraphs. In this paper we consider sufficient conditions for $2\rightarrow 1$ hypergraphs to be proper $k$-colorable for some small $k$. }\end{abstract}

\section{Introduction}

A hypergraph is a pair $(V,E)$ where $V$ is a finite set and $E$ is a family of non-empty subsets of $V$. The elements of the set $V$ are called vertices, and the elements of the set $E$ are called hyperedges. A hypergraph is $\emph{properly k-colorable}$ if its vertices can be colored with $k$ colors such that each edge contains vertices from at least two different color classes and such a coloring is called a proper $k$-coloring. The $\emph{chromatic number}$ of a hypergraph is $k$ if it admits a proper $k$-coloring but not a proper $(k-1)$-coloring.

Lovász proved the following result which gives a sufficient condition for proper 2-colorability.
\begin{thm}\label{one_intersection}\cite{5}
	Let $H$ be a hypergraph in which every pair of hyperedges has an empty intersection or intersects in at least two vertices. Then $H$ admits a proper 2-coloring.
\end{thm}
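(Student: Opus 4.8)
The plan is to prove the statement by a local-search (extremal) argument on $2$-colorings, rather than by any probabilistic tool. First I would fix the two colors, say red and blue, and among all colorings $c\colon V\to\{\text{red},\text{blue}\}$ of $H$ I would choose one minimizing the number $m(c)$ of \emph{monochromatic} hyperedges (those contained in a single color class). The goal is to show $m(c)=0$; assuming $m(c)>0$, I will contradict minimality by exhibiting a single-vertex recoloring that strictly decreases $m$. Throughout I assume every hyperedge has at least two vertices, which is necessary for any proper $2$-coloring to exist in the first place.

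Next, suppose $e$ is monochromatic under $c$, without loss of generality all red, and let $v\in e$ be arbitrary; let $c'$ be the coloring obtained from $c$ by recoloring $v$ blue. Since recoloring $v$ affects only hyperedges containing $v$, I would track exactly those. Every edge $g\ni v$ that was monochromatic under $c$ must have been all red (as $v$ was red), so under $c'$ it contains the blue vertex $v$ together with its remaining red vertices and is therefore no longer monochromatic; in particular $e$ itself becomes bichromatic. Thus the only way $m$ could fail to drop is if some edge \emph{newly} becomes monochromatic.

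The crux — and the only place the hypothesis is used — is to rule this out. An edge $f$ can newly become monochromatic under $c'$ only if $v\in f$ and, after the flip, $f$ is all blue; equivalently, under $c$ every vertex of $f$ other than $v$ was already blue while $v$ was red. Then $v\in e\cap f$, so $e$ and $f$ are not disjoint, and the hypothesis forces $|e\cap f|\ge 2$. Pick $w\in e\cap f$ with $w\ne v$. On one hand $w\in e$ forces $w$ to be red; on the other hand $w\in f\setminus\{v\}$ forces $w$ to be blue — a contradiction. Hence no new monochromatic edge is created, so $m(c')\le m(c)-1$, contradicting minimality.

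I expect the main (indeed essentially the entire) content to reside in the last paragraph: once one observes that a single-vertex flip inside a monochromatic edge can only endanger edges meeting $e$ in precisely the flipped vertex, the ``$\ge 2$ common vertices'' condition immediately supplies a second, oppositely forced vertex that kills the danger. The remaining steps — setting up the extremal coloring and bookkeeping which edges change status — are routine. A minor degeneracy to dispatch is the case of singleton edges, which I would handle by remarking that such an edge makes a proper $2$-coloring impossible and is therefore implicitly excluded from the hypothesis.
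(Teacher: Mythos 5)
Your proof is correct, but note that the paper contains no proof of this statement at all: Theorem \ref{one_intersection} is quoted as Lov\'asz's result (reference [5]) and serves only as the starting point for the directed generalization. Your argument — take a $2$-coloring minimizing the number of monochromatic edges, flip one vertex $v$ of a monochromatic (say all-red) edge $e$, observe that every previously monochromatic edge through $v$ becomes bichromatic, and that a newly all-blue edge $f$ would have to meet $e$ in $v$ alone, whence the hypothesis $|e\cap f|\ge 2$ yields a second common vertex forced to be red (by $e$) and blue (by $f$) simultaneously — is the standard extremal proof of Lov\'asz's theorem, and every step checks out; your caveat about singleton edges is also genuinely needed, since the paper's definition of hypergraph permits them and they would falsify the statement. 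The instructive comparison is with how the paper proves its generalization, Theorem \ref{one_head}: there the intersection hypothesis is asymmetric (one-vertex intersections are allowed provided the common vertex is a head-vertex of one of the edges), and this breaks your flip argument, because the newly monochromatic edge $f$ may now legitimately meet $e$ in a single vertex and no contradiction arises. That is precisely why the paper replaces the global minimization by a sequential coloring algorithm that processes vertices in an adaptively built order, with carefully limited recolorings. So your approach buys brevity and is entirely adequate for the undirected statement as posed; the paper's heavier machinery exists because the extremal argument does not survive the passage to directed hypergraphs.
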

We get a generalization of this theorem if we do not require for every pair of hyperedges to have an empty intersection or intersection with size at least two. We need the definition of directed hypergraphs.

\begin{dfn}{
		A directed hypergraph is a hypergraph in which the vertex set of each hyperedge is partitioned into two disjoint parts, a head and a tail. Accordingly, the vertices in the head are called head-vertices and the vertices in the tail are called tail-vertices.
		For a hyperedge $e$, $h(e)$ denotes the set of head-vertices of the hyperedge $e$ and $t(e)$ denotes the set of its tail-vertices.
	}
\end{dfn}

In a directed hypergraph if two hyperedges intersect in a vertex $v$, then there are three different cases: $v$ is head-vertex of both hyperedges, head-vertex one of them and tail-vertex of the other or tail-vertex of both.

 Keszegh and Pálvölgyi have the following conjecture which generalizes Theorem \ref{one_intersection}.
\begin{conjecture}\label{conjecture}\cite{2}
	Let $H$ be a directed hypergraph such that each hyperedge has more tail-vertices than head-vertices, and suppose that for every $e_{1},e_{2}\in E(H)$ with $|e_{1}\cap e_{2}|=1$, the common vertex is the head-vertex of at least one of $e_{1},e_{2}$. Then there exists a proper 2-coloring of the hypergraph $H$.
\end{conjecture}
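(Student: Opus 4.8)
The plan is to attack Conjecture \ref{conjecture} by a local recoloring argument that extends the mechanism underlying Lovász's Theorem \ref{one_intersection}. The first step is to restate the single-intersection hypothesis in a more workable form. Saying that whenever $|e_{1}\cap e_{2}|=1$ the common vertex is a head-vertex of at least one of $e_{1},e_{2}$ is equivalent to saying that there is no vertex $v$ and no two distinct hyperedges $e,f$ with $e\cap f=\{v\}$ and $v\in t(e)\cap t(f)$; contrapositively, if a vertex $v$ is a tail-vertex of two distinct hyperedges $e$ and $f$, then necessarily $|e\cap f|\ge 2$. Thus the tail-incidences obey a Lovász-type separation, and head-vertices are the only places where genuine single-vertex intersections are permitted. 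This reformulation is what lets the head/tail structure interact with the intersection pattern.

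I would then run the following process. Starting from the all-red coloring, as long as some hyperedge is monochromatic I fix one such edge $e$ by flipping the colour of a single \emph{tail}-vertex $v\in t(e)$; such a tail exists since $|t(e)|>|h(e)|\ge 0$, and as $2$-colorability forces $|e|\ge 2$, the flip leaves $e$ with a vertex of each colour. The key local observation is that this flip is harmless on heavily intersecting edges: if $f$ satisfies $|e\cap f|\ge 2$, then $f$ already contained at least two vertices of $e$, all of the (monochromatic) colour of $e$, and the flip changes at most one of them, so $f$ keeps a vertex of each colour and cannot newly become monochromatic. Hence the \emph{only} edges that can turn monochromatic are those $f$ with $e\cap f=\{v\}$, and for these the reformulated hypothesis forces $v\in h(f)$, since $v\in t(e)$. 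Because $v\in h(f)$ is not a tail of $f$, any later repair of $f$ flips a tail of $f$ lying in $f\setminus\{v\}\subseteq f\setminus e$, so repairing $f$ never re-breaks $e$.

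The remaining, and decisive, task is to prove that the process terminates, i.e. to exhibit a potential that strictly improves at every flip. The naive choice, the number of monochromatic edges, need not decrease, because repairing an all-red $e$ may create a new all-blue edge $f$ through a shared head-vertex $v$. I would therefore look for a weighted potential that charges a monochromatic edge according to the colours of its head-vertices, so that the newly created all-blue $f$, whose head $v$ has just become blue, is strictly cheaper than $e$ was, combined with a secondary lexicographic count that rules out cycles $e\to f\to g\to\cdots$ propagating through head-shared single intersections.

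The main obstacle is precisely this termination step in full generality. When every hyperedge has a single head-vertex and size at least three — the setting of the paper's main theorem — the head is uniquely determined and the tail-majority is ample, so one can design a potential that the cascades respect; this is what makes that case tractable. Under only the inequality $|h(e)|<|t(e)|$, however, a single flipped tail may simultaneously be a head-vertex of several edges, so one repair can spawn several new monochromatic edges at once, and a vertex may be a head-vertex of some edges while being a tail-vertex of others, which defeats any attempt to freeze all heads to one colour. Controlling these simultaneous, overlapping cascades, and thereby closing the gap between the one-headed result and the general conjecture, is where I expect the real difficulty to lie.
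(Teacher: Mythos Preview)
The statement you are addressing is Conjecture~\ref{conjecture}, which the paper does \emph{not} prove; it is explicitly listed as open in the concluding section. The paper only establishes the special case in which every hyperedge has exactly one head-vertex (Theorem~\ref{one_head}). There is therefore no proof in the paper to compare your proposal against.

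Your proposal is, correspondingly, not a proof either, and you acknowledge this: the termination of your tail-flipping process is left as the ``decisive task'', and no potential function is actually exhibited, not even for the one-headed subcase where you assert one can be designed. The local observations you record are correct --- a flip at a tail $v$ of a monochromatic $e$ can only spoil edges $f$ with $e\cap f=\{v\}$ and $v\in h(f)$, and the immediate repair of such an $f$ does not touch $e$ --- but they fall short of termination. A single flip may create several new monochromatic edges simultaneously (through $v$ being a head of several $f$'s), and longer cascades can in principle circle back to $e$ through other vertices. Without an explicit, verified potential the argument is a heuristic, not a proof, and the conjecture remains exactly as open after your proposal as before.

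For contrast, the paper's proof of the one-headed case does not use a flip-until-stable scheme at all. It is a single-pass algorithm with one-step backtracking: vertices are processed in an order that is partly determined on the fly, a vertex is turned red when it is the last tail of an all-blue edge, and if this produces an all-red edge the immediately preceding vertex is reverted to blue. The substance of the proof lies in Claims~\ref{monochromatic_red} and~\ref{consecutive} and Corollaries~\ref{unique} and~\ref{at_most_one}, which show that any all-red edge so created must consist of \emph{consecutive} vertices in the processing order, hence is unique and is undone by the single backtrack. That structural consecutivity fact has no counterpart in your unordered local search; if you want your approach to recover even the one-headed theorem, you must actually produce the promised potential and verify it.
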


  Keszegh showed that the conjecture is true for 3-uniform hypergraphs \cite{2}.

\begin{thm}\label{H_{1}}\cite{2}
	Let $H$ be a $3$-uniform hypergraph in which every hyperedge has two tail-vertices and one head-vertex. Suppose that if $e_{1},e_{2}\in E(H)$ with $|e_{1}\cap e_{2}|=1$, then the common vertex is a head-vertex in at least one of $e_{1}$ and $e_{2}$. Then $H$ admits a proper 2-coloring.
\end{thm}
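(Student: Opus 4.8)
The plan is to prove Theorem \ref{H_{1}} by a variational (local-switching) argument in the spirit of the standard proof of Lovász's Theorem \ref{one_intersection}, with the extra hypothesis used to restrict the damage caused by a single recoloring step. Recall that in Lovász's setting one takes a $2$-coloring minimizing the number of monochromatic edges; if some edge $e_0$ is monochromatic (say all red), one flips a vertex $v\in e_0$ to blue, which destroys $e_0$, and since any edge meets $e_0$ in $0$ or at least $2$ vertices, no new monochromatic edge can appear, contradicting minimality. Here two edges may meet in a single vertex, so flipping can create new monochromatic edges; the key observation is that this is controlled if we only ever flip \emph{tail}-vertices. Precisely: if $e_0=\{h_0,t_1,t_2\}$ is monochromatic red and we flip a tail, say $t_1$, to blue, then any edge $f$ that becomes monochromatic (necessarily all blue) must contain $t_1$ and have all its remaining vertices blue; such an $f$ cannot share a red vertex of $e_0$, so $f\cap e_0=\{t_1\}$, and the hypothesis applied to the pair $e_0,f$ (whose unique common vertex $t_1$ is a \emph{tail} of $e_0$) forces $t_1$ to be the \emph{head} of $f$. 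Thus a tail-flip can only create monochromatic edges having the flipped vertex as their head.

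The second ingredient is a structural lemma extracted from the hypothesis. For a vertex $v$ let $T(v)$ be the set of edges in which $v$ is a tail. If $e,e'\in T(v)$ are distinct they share $v$; were $v$ their only common vertex, this would be a single intersection in which $v$ is a tail of both, contradicting the hypothesis; hence $|e\cap e'|\ge 2$. Deleting $v$, the two-element sets $\{e\setminus\{v\}:e\in T(v)\}$ therefore pairwise intersect, and a pairwise-intersecting family of edges of a graph is either a \emph{star} (all sharing a common vertex) or a \emph{triangle}. So the tail-link of every vertex is a star or a triangle, a strong local constraint. Working with a counterexample $H$ that is minimal in its number of edges, one also obtains the usual reductions: every vertex lies in at least two edges (otherwise a vertex in a single edge could be recolored after $2$-coloring $H$ minus that edge), and for every edge $e$ the hypergraph $H-e$ is properly $2$-colorable.

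Combining these, I would fix an edge $e_0$, take a proper $2$-coloring of $H-e_0$ (so that $e_0$ is the unique monochromatic edge, say all red), and try to repair it by flipping tails. By the first observation each repair of a red monochromatic edge by a tail-flip can only spawn blue monochromatic edges headed at the flipped tail, and symmetrically a blue edge repaired by a tail-flip only spawns red edges headed at the flipped tail; this organizes the process as an alternating sequence of tail-flips in which the ``defect'' edges are alternately red and blue and are always centered at the most recently flipped vertex. The heart of the proof, and the step I expect to be the main obstacle, is to show that this switching process terminates in a proper $2$-coloring rather than cycling --- equivalently, that at the coloring minimizing the number of monochromatic edges no tail-flip can create as many new monochromatic edges as it destroys. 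This is where the star/triangle structure of the tail-links must be brought in: a created edge headed at $v$ and a destroyed edge having $v$ as a tail interact through $T(v)$, and the constraint that $T(v)$ is a star or a triangle should rule out the repeated perfect compensation that an infinite or count-preserving process would require, yielding the contradiction. Managing the possible branching (a single tail-flip may create several head-centered edges) and packaging the bookkeeping into a monovariant is the delicate part that the local structure is designed to handle.
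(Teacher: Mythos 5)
Your two preparatory observations are correct and genuinely useful: (i) if you flip a tail-vertex $t$ of a monochromatic edge $e_0$, any newly monochromatic edge $f$ satisfies $f\cap e_0=\{t\}$, and the hypothesis then forces $t$ to be the head of $f$; (ii) the tail-link of every vertex is a pairwise-intersecting family of $2$-sets, hence a star or a triangle. Both verifications go through. The problem is that your proof stops exactly where it has to start: you never prove that the switching process terminates, or equivalently that at a $2$-coloring minimizing the number of monochromatic edges some tail-flip strictly decreases that number. You yourself flag this as ``the main obstacle'' and ``the delicate part'', and you only assert that the star/triangle structure \emph{should} rule out perfect compensation. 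That missing step is the entire content of the theorem; without an explicit monovariant or potential function, what you have is a plan, not a proof. The danger is real, not hypothetical: a single tail-flip can create several monochromatic edges headed at the flipped vertex (your own setup permits this branching), so the count of monochromatic edges can go up, and nothing you wrote excludes a count-preserving or growing alternating cascade. Note also a structural mismatch: your lemma constrains edges that share a common \emph{tail}, whereas the interaction produced by your flips is between an old edge having the flipped vertex as a tail and new edges having it as a \emph{head}; the lemma does not speak to that configuration directly, so even the intended use of the local structure is not yet set up.

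For comparison, the paper proves the stronger Theorem \ref{one_head} (one head-vertex, edges of arbitrary size at least three), which contains Theorem \ref{H_{1}} as a special case, by a mechanism that sidesteps termination entirely: vertices are processed once each in an \emph{adaptively chosen order} (after coloring a tail red because of a blue edge $e$, the algorithm jumps to $h(e)$ next), with at most a single one-step undo. The order is engineered so that any monochromatic red edge must consist of consecutively processed vertices with its head processed last (Claims \ref{monochromatic_red} and \ref{consecutive}); hence at most one such edge arises per step (Corollary \ref{at_most_one}) and the undo repairs it without creating a lasting blue edge. Termination is automatic because each vertex changes color at most twice. If you want to salvage your variational approach, the concrete task is to exhibit a weight on colorings that your star-or-triangle lemma provably decreases at every flip; as written, that argument is absent.
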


Further generalizing this, our main result is that the conjecture is also true for hypergraphs in which every hyperedge has exactly one head-vertex.
 
 \begin{thm}\label{one_head}{
 		Let $H$ be a directed hypergraph such that in every hyperedge the number of tail-vertices is at least two and the number of head-vertices is exactly one, and suppose that for every $e_{1},e_{2}\in E(H)$ with $|e_{1}\cap e_{2}|=1$, the common vertex $v$ is the head-vertex of at least one of $e_{1},e_{2}$. Then $H$ admits a proper 2-coloring.}
 \end{thm}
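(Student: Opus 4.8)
The plan is to argue by a minimal counterexample, taking $H$ to minimize the total size $\sum_{e\in E(H)}|e|$ among all one-headed directed hypergraphs with at least two tails per edge that satisfy the singleton-intersection hypothesis, and to reduce everything to the two tools already available: Lov\'asz's Theorem \ref{one_intersection} and the $3$-uniform case, Theorem \ref{H_{1}}. First I would record the reformulation that drives the argument: a pair $e_1,e_2$ violates the intersection condition of Theorem \ref{one_intersection} (that is, $|e_1\cap e_2|=1$) only when the unique common vertex is the head of one of the two edges; equivalently, whenever a vertex $v$ is a tail of both $e_1$ and $e_2$ we must have $|e_1\cap e_2|\ge 2$. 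Thus every obstruction to a direct application of Lov\'asz's theorem sits at a head-vertex, and the edges sharing a fixed tail-vertex form a pairwise $2$-intersecting family.

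The core reduction shrinks edges toward the $3$-uniform case. Let $e$ have head $h$ and tails $t_1,\dots,t_k$ with $k\ge 3$, and form $e'=e\setminus\{t_k\}$, still a legal edge with one head and $k-1\ge 2$ tails. Passing to $H'=(H\setminus\{e\})\cup\{e'\}$ can only shrink intersections, so the only way the hypothesis can fail in $H'$ is that some edge $f$ has $e\cap f=\{t_k,x\}$ with $x$ a tail of both $e$ and $f$, for then $e'\cap f=\{x\}$ becomes a forbidden tail--tail singleton; call $t_k$ \emph{blocked} in this situation. If $t_k$ is not blocked, then $H'$ lies in the class and is strictly smaller, hence admits a proper $2$-coloring by minimality; since $e'\subseteq e$, that coloring leaves $e$ non-monochromatic as well, contradicting the choice of $H$. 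Consequently, in the minimal counterexample every tail of every edge of size at least four is blocked. In particular, if $H$ had no edge with three or more tails it would be exactly a $2\to 1$ hypergraph satisfying the hypothesis of Theorem \ref{H_{1}}, hence $2$-colorable, a contradiction. So $H$ must contain an edge all of whose tails are blocked.

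It remains -- and this is the main obstacle -- to rule out such a fully blocked edge $e$ with tails $t_1,\dots,t_k$. Each $t_i$ comes with a witness $f_i$ meeting $e$ in exactly $\{t_i,x_i\}$, where $x_i$ is another tail of $e$; this defines a map $t_i\mapsto x_i$ on the tail-set of $e$, so the witnesses encode a dense web of forced $2$-intersections around $e$. The plan here is to exploit this web: either to locate a second legitimate reduction (for instance deleting a carefully chosen pair of tails, or reducing at a witness edge, whose enforced $2$-intersections with $e$ should keep the hypothesis intact), or to isolate a sub-hypergraph in which \emph{every} pair of edges is disjoint or $2$-intersecting and apply Theorem \ref{one_intersection} directly to extend a coloring across the blocked configuration. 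I expect the bookkeeping for this step to be the heart of the proof, since one must control how a head-vertex of one edge reappears as a tail-vertex of others -- precisely the interaction that prevents a naive vertex-splitting from collapsing the whole problem onto Theorem \ref{one_intersection} in a single stroke.
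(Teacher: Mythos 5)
Your reduction framework is set up correctly: replacing an edge $e$ by $e'=e\setminus\{t_k\}$ can only destroy the hypothesis when some $f$ has $e\cap f=\{t_k,x\}$ with $x$ a common tail, so your notion of a \emph{blocked} tail is the right one, and the conclusion that a minimal counterexample must contain an edge with at least three tails, all of them blocked, is sound (with the base case correctly handed off to Theorem \ref{H_{1}}). However, the proof stops exactly where the theorem actually lives. Your final paragraph does not rule out a fully blocked edge; it only lists two hopes (``locate a second legitimate reduction'' or ``isolate a sub-hypergraph'' to which Theorem \ref{one_intersection} applies) without carrying out either. This is a genuine gap, not a routine verification: a fully blocked edge is a locally consistent configuration. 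For example, take $e$ with head $h$ and tails $t_1,t_2,t_3$ together with witnesses $f_1\supseteq\{t_1,t_2\}$, $f_2\supseteq\{t_2,t_3\}$, $f_3\supseteq\{t_3,t_1\}$, each meeting $e$ in exactly those two tails; nothing in your web of forced $2$-intersections contradicts the hypothesis, and the witnesses $f_i$ can themselves be large edges whose own tails are blocked by further edges, so neither proposed escape route is available without a substantially new idea. Reducing at a witness edge faces the identical blocking obstruction one level down, and the edges around a blocked configuration need not pairwise $2$-intersect (two witnesses $f_i,f_j$ may meet in a single vertex that is a head of one of them), so Theorem \ref{one_intersection} does not apply to any natural subhypergraph.

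For comparison, the paper does not use induction or a minimal counterexample at all. It runs a single sequential coloring algorithm: vertices are processed in an adaptively chosen order, all start blue, a vertex is recolored red when it completes the tail-set of an all-blue edge, the immediately preceding vertex is recolored blue if this creates an all-red edge, and the head of the offending edge is processed next. Correctness is then proved structurally: a monochromatic red edge must have its head as its last vertex (Claim \ref{monochromatic_red}), and any such edge must consist of consecutively processed vertices (Claim \ref{consecutive}), which makes the single local repair (recoloring the previous vertex blue) sufficient and verifiably harmless. That argument handles arbitrary edge sizes uniformly, precisely the regime where your reduction to the $3$-uniform case breaks down.
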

Note that in Theorem \ref{one_head} $H$ does not need to be uniform.
The 3-uniform directed hypergraphs in which every hyperedge has exactly one head-vertex are called $2\rightarrow 1$ hypergraphs. A hyperedge can be written as $ab\rightarrow c$, where $a$ and $b$ are the two tail-vertices of the hyperedge and $c$ is its head-vertex. Cameron was interested in the extremal properties of $2\rightarrow 1$ hypergraphs avoiding a $2\rightarrow 1$ hypergraph with two edges \cite{1}. The $2\rightarrow 1$ hypergraphs studied by Cameron are the following.

\begin{eqnarray*}
	V(H_{2})&=&\{a,b,c,d\}, \  \ E(H_{2})=\{ab\rightarrow c,ab\rightarrow d\}\\
	V(I_{1})&=&\{a,b,c,d\}, \  \ E(I_{1})=\{ab\rightarrow c,ad\rightarrow c\}\\
	V(R_{3})&=&\{a,b,c,d\}, \  \ E(R_{3})=\{ab\rightarrow c,bc\rightarrow d\}\\
	V(E)&=&\{a,b,c,d\}, \  \ E(E)=\{ab\rightarrow c,dc\rightarrow b\}\\
	V(I_{0})&=&\{a,b,c,d,e\}, \  \ E(I_{0})=\{ab\rightarrow e,cd\rightarrow e\}\\
	V(H_{1})&=&\{a,b,c,d,e\}, \  \ E(H_{1})=\{ab\rightarrow c,ad\rightarrow e\}\\
	V(R_{4})&=&\{a,b,c,d,e\}, \  \ E(R_{4})=\{ab\rightarrow c,cd\rightarrow e\}
\end{eqnarray*}

Given a pair of directed hypergraphs $H$ and $G$, we say that $G$ is a $\emph{subhypergraph}$ of $H$ if one can obtain $G$ from $H$ by removing edges and vertices.
Theorem \ref{H_{1}} can also be stated as follows: if a $2\rightarrow 1$ hypergraph does not contain the hypergraph $H_{1}$ as a subhypergraph, then it admits a proper 2-coloring. We study the chromatic number of $2\rightarrow 1$ hypergraphs that avoid any one of the above two-edge $2\rightarrow 1$ hypergraphs. 

\begin{claim}\label{H_{2}}
	For every integer $k\geq 2$, there exists a $2\rightarrow 1$ hypergraph $H=(V,E)$ such that it does not contain $H_{2}$ as a subhypegraph, namely if $e_{1},e_{2}\in E$ with $e_{1}\cap e_{2}=\{u,v\}$, then $u$ or $v$ is a head-vertex of at least one hyperedge and the chromatic number of $H$ is at least k.
\end{claim}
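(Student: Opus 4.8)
The plan is to produce the required hypergraphs by a probabilistic construction, after first simplifying the structural hypothesis. In a $2\rightarrow 1$ hypergraph every edge has a single head, so if two distinct edges $e_1,e_2$ satisfy $e_1\cap e_2=\{u,v\}$ and neither $u$ nor $v$ is a head of $e_1$ or of $e_2$, then $u,v$ are tails of both edges, which forces $t(e_1)=t(e_2)=\{u,v\}$. Hence the hypothesis of the claim (that $H$ omits $H_2$) is equivalent to the clean combinatorial requirement that distinct edges have distinct tail-pairs, i.e.\ the map $e\mapsto t(e)$ is injective. It therefore suffices to build, for each $k$, a $2\rightarrow 1$ hypergraph in which every $2$-subset of $V$ occurs as the tail-pair of at most one edge and whose chromatic number is at least $k$.

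For this I would fix a large $n=n(k)$, set $V=[n]$, and index the edges by the $2$-subsets of $V$: for each pair $\{i,j\}$ choose a head $f(\{i,j\})\in V\setminus\{i,j\}$ independently and uniformly at random, and take the oriented triple with tail-pair $\{i,j\}$ and head $f(\{i,j\})$. By construction every pair is used exactly once as a tail-pair, so the resulting hypergraph $H$ is automatically $H_2$-free for every outcome of the random choices; only the chromatic lower bound is left to the randomness. Fix a colouring $\phi\colon V\to[k-1]$ with colour classes $C_1,\dots,C_{k-1}$. A triple is monochromatic under $\phi$ exactly when its two tails lie in a common class and its head lies in that same class, so $\phi$ is proper precisely when, for every monochromatic tail-pair $\{i,j\}$, the head $f(\{i,j\})$ lands outside the class of $i$ and $j$. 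Since the heads are chosen independently over the pairs, these events are independent, and writing the constraint class by class gives
\[
\Pr[\phi \text{ is proper}] \;=\; \prod_{t=1}^{k-1}\left(\frac{n-|C_t|}{n-2}\right)^{\binom{|C_t|}{2}}.
\]

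Finally I would bound the failure probability by a union bound over all $(k-1)^n$ colourings. In any colouring some class has size $m\ge s:=\lceil n/(k-1)\rceil$, and since $\binom{m}{2}\log\frac{n-2}{n-m}$ is increasing in $m$, the factor contributed by that class alone is at most $\bigl((n-s)/(n-2)\bigr)^{\binom{s}{2}}$; as $(n-s)/(n-2)\to (k-2)/(k-1)<1$, this factor is $\exp\!\bigl(-\Theta_k(n^2)\bigr)$. Because $(k-1)^n=\exp\!\bigl(O_k(n)\bigr)$, for all sufficiently large $n$ the union bound yields $\Pr[\chi(H)<k]<1$, so some choice of heads gives a $2\rightarrow1$ hypergraph with $\chi(H)\ge k$, proving the claim. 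The main obstacle here is conceptual rather than computational: the key step is recognising that the $H_2$-avoidance hypothesis collapses to \emph{distinct tail-pairs} and can thus be satisfied for free by the one-edge-per-pair model, which decouples the structural constraint from the chromatic estimate, after which the union bound is routine. (Alternatively, one could invoke the known existence of $3$-uniform linear hypergraphs of arbitrarily large chromatic number and orient each edge arbitrarily: linearity forces any two edges to share at most one vertex, so the tail-pairs are trivially distinct and the orientation does not affect the colouring.)
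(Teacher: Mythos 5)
Your proof is correct, and it takes a genuinely different route from the paper's. Your opening reduction is the right one---avoiding $H_{2}$ is exactly the condition that distinct hyperedges have distinct tail-pairs---and your random model (one edge per pair of $[n]$ with an independent uniform head) satisfies this structural condition for free, so the entire burden falls on the chromatic bound, which your union-bound computation handles correctly: by independence of the heads the properness probability of a fixed colouring factorizes over colour classes, the largest class (of size at least $\lceil n/(k-1)\rceil$) alone contributes a factor $\exp\bigl(-\Theta_{k}(n^{2})\bigr)$ by your monotonicity observation, and this beats the $(k-1)^{n}$ colourings for large $n$. The paper instead argues by induction on $k$ with an explicit construction: given disjoint $H_{2}$-avoiding hypergraphs $A$ and $B$ of chromatic number at least $k$, it adds a single new vertex $x$ and all hyperedges $v_{1}v_{2}\rightarrow x$ with $v_{1}\in V(A)$, $v_{2}\in V(B)$. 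Any proper $k$-colouring of the result restricts to proper colourings of $A$ and of $B$, each of which must then use all $k$ colours, so the colour of $x$ appears on both sides and some new edge is monochromatic; $H_{2}$-avoidance is preserved because each new tail-pair has one vertex on each side (hence occurs in only one edge) and any $2$-element intersection involving a new edge contains $x$, which is a head-vertex. The trade-off: the paper's construction is deterministic, self-contained and needs no asymptotic estimates, but its vertex count roughly doubles at each step, giving examples of size exponential in $k$; your construction is non-constructive but yields examples on only polynomially many vertices (order $k^{3}\log k$ suffices) and shows that almost every head-assignment on the complete set of pairs works. Your parenthetical alternative---arbitrarily orienting a $3$-uniform linear hypergraph of large chromatic number---is also valid, but it imports the nontrivial Erd\H{o}s--Hajnal-type existence theorem for such hypergraphs rather than proving the claim from scratch.
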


For the hypergraphs $I_{1}$, $R_{3}$ and $E$, we can also give $2 \rightarrow 1$ hypergraphs with chromatic number at least k such that it does not contain these hypergraphs as subhypegraphs. Specifically, a $2\rightarrow 1$ hypergraph with chromatic number at least $k$ and not containing any of the hypergraphs $I_{1}$, $R_{3}$ and $E$ as subhypergraphs can be given. It is easy to check that a $2\rightarrow1$ hypergraph does not contain any of the hypergraphs $I_{1}$, $R_{3}$ and $E$ as subhypergraphs if and only if it does not contain two hyperedges intersecting in exactly two vertices which are not tail-vertices in both of them.

\begin{claim}\label{I_{1},R_{3},E}
	Let $k\geq 2$ be an integer. Then there exists a $2\rightarrow 1$ hypergraph $H$ with chromatic number at least k and not containing any of the hypergraphs $I_{1}$, $R_{3}$ and $E$ as subhypergraphs, namely if $e_{1},e_{2}\in E(H)$ with $e_{1}\cap e_{2}=\{u,v\}$, then $t(e_{1})=t(e_{2})=\{u,v\}$.
\end{claim}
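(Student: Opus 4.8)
The plan is to exploit the fact that the forbidden configurations $I_1,R_3,E$ only constrain pairs of hyperedges meeting in \emph{exactly two} vertices. If the underlying $3$-uniform hypergraph is \emph{linear} (no two hyperedges share two vertices), then the hypothesis ``$e_1\cap e_2=\{u,v\}\Rightarrow t(e_1)=t(e_2)=\{u,v\}$'' holds with no nontrivial instances to check, and any way of designating one vertex of each triple as its head turns it into a legitimate $2\rightarrow1$ hypergraph avoiding all three patterns. Since orientation does not affect the underlying set system, and hence not the chromatic number, the whole claim reduces to the following: for every $k$ there is a linear $3$-uniform hypergraph with chromatic number at least $k$. (This emphasises that the ``namely'' clause is a \emph{characterisation} of the avoidance condition, not an assertion that shared-tail pairs must occur.) So the first step I would record is this reduction.

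For the reduced statement I would use the probabilistic deletion method for uncrowded hypergraphs. Take the random $3$-uniform hypergraph $H(N,p)$ on $N$ vertices in which each triple is included independently with probability $p=N^{-3/2}$, and control two quantities. First, the independence number: a fixed $s$-set spans no edge with probability $(1-p)^{\binom s3}$, so a union bound gives that with high probability every edge-free set has size at most $s_0=\sqrt{6p^{-1}\log N}=O\!\left(N^{3/4}\sqrt{\log N}\right)$. Second, the number $B$ of ``bad pairs'' of hyperedges sharing exactly two vertices has expectation $\mathbb E[B]=\tfrac12\binom N2(N-2)(N-3)p^2\sim \tfrac14 N^4p^2=\tfrac14 N$, so by Markov's inequality $B\le \tfrac12 N$ with probability at least $\tfrac12$. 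Choosing $N$ large, both events hold simultaneously with positive probability, so I may fix one outcome with independence number at most $s_0$ and at most $\tfrac12 N$ bad pairs.

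Finally I would linearise by deletion. For each bad pair I delete one of its two \emph{private} (non-shared) vertices; this removes one edge of every pair of hyperedges that meet in two vertices, so what remains is linear, and at most $\tfrac12 N$ vertices are deleted, leaving a vertex set $V'$ with $|V'|\ge \tfrac12 N$. The point I would stress is why the chromatic number survives: an edge-free subset of the surviving hypergraph is an edge-free subset of the original restricted to $V'$, so its independence number is still at most $s_0$, and therefore its chromatic number is at least $|V'|/s_0=\Omega\!\left(N^{1/4}/\sqrt{\log N}\right)$, which exceeds $k$ once $N$ is large enough. The \textbf{main obstacle} is exactly this coordination step: deleting edges to kill bad pairs could in principle destroy the chromatic lower bound, so it is essential that I bound $\chi$ through the independence number (a monotone quantity under passing to an induced subhypergraph) and that I remove \emph{vertices} rather than merely edges, keeping the deleted set small relative to $N$. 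I would also remark that, as an alternative to this self-contained argument, one may simply invoke known explicit constructions of linear $3$-uniform hypergraphs of unbounded chromatic number.
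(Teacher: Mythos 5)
Your proposal is correct, but it takes a genuinely different route from the paper's proof. The paper argues by an explicit induction: given two pattern-avoiding $2\rightarrow 1$ hypergraphs $A$ and $B$ on $n$ vertices each, both with chromatic number at least $k$, it forms $H$ with $V(H)=V(A)\cup V(B)\cup\{x_{\sigma}:\sigma\in S_{n}\}$ and $E(H)=E(A)\cup E(B)\cup\{a_{i}b_{\sigma(i)}\rightarrow x_{\sigma}\}$; in any $k$-coloring one finds $k$ pairwise differently colored vertices in each of $A$ and $B$, matches equal colors by a suitable permutation $\sigma$, and obtains a monochromatic edge through $x_{\sigma}$. Note that in that construction pairs of edges sharing two vertices genuinely occur (e.g.\ $a_{i}b_{\sigma(i)}\rightarrow x_{\sigma}$ and $a_{i}b_{\tau(i)}\rightarrow x_{\tau}$ with $\sigma(i)=\tau(i)$), and the avoidance condition is verified directly, not vacuously. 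Your argument instead observes, correctly, that any orientation of a \emph{linear} $3$-uniform hypergraph satisfies the condition vacuously, reducing the claim to the classical fact that linear $3$-uniform hypergraphs of unbounded chromatic number exist, which you then prove by the deletion method. Your computations are sound: with $p=N^{-3/2}$ the expected number of pairs of edges sharing two vertices is $\sim N/4$, deleting one private vertex per such pair leaves an induced linear hypergraph on at least $N/2$ vertices, and since independence number is monotone under taking induced subhypergraphs, $\chi\geq(N/2)/s_{0}=\Omega\bigl(N^{1/4}/\sqrt{\log N}\bigr)$. The only cosmetic caveat is in the union bound for the independence number: with $s_{0}=\sqrt{6p^{-1}\log N}$ you need the estimate $\binom{N}{s}\leq(eN/s)^{s}$ (giving exponent $\approx\frac{1}{4}s\log N$) rather than $N^{s}$, but there is ample room. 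As for what each approach buys: the paper's construction is elementary, deterministic and self-contained, at the cost of factorial growth of the vertex set per induction step; yours is shorter given standard probabilistic tools, connects the claim to a known classical result, and actually proves something stronger, since a linear hypergraph simultaneously avoids every two-edge pattern with a two-vertex intersection, so the same example also settles the paper's Claim about avoiding $H_{2}$.
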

On the other hand avoiding the hypergraph $I_{0}$ is a sufficient condition for proper 4-colorability.
\begin{thm}\label{I_{0}}
	Let $H$ be a $2\rightarrow 1$ hypergraph. Suppose that $H$ does not contain $I_{0}$ as a subhypergraph, namely if $e_{1},e_{2}\in E(H)$ with $|e_{1}\cap e_{2}|=1$, then the common vertex is a tail-vertex of at least one hyperedge. Then $H$ admits a proper 4-coloring.
\end{thm}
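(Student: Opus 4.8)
The plan is to reduce the hypergraph-colouring problem to an ordinary graph-colouring problem by \emph{breaking every hyperedge through its head}. For a hyperedge $e=ab\rightarrow c$, if the colour of the head $c$ differs from the colour of at least one of its tails, then $e$ carries two colours and is not monochromatic. Hence it suffices to select, for each hyperedge, a small set of tails that its head must ``conflict with'', to collect these conflict pairs into an auxiliary graph $G^{\star}$ on $V(H)$, and to prove that $\chi(G^{\star})\le 4$; any proper $4$-colouring of $G^{\star}$ is then automatically a proper $4$-colouring of $H$, since every hyperedge is an in-edge of its head and will have been broken.

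The first key step is to extract from the $I_{0}$-free hypothesis the local structure of the in-edges of a vertex $c$, that is, the hyperedges with head $c$. If $e_{1}\neq e_{2}$ both have head $c$, then $c$ is a head-vertex of both, so by hypothesis they cannot meet in $c$ alone; thus $|e_{1}\cap e_{2}|\ge 2$ and they share a tail. Consequently the tail-pairs of the in-edges of $c$ form an \emph{intersecting family of $2$-element sets}, which is classically either a \emph{star} (all tail-pairs contain a common tail $a_{c}$) or a \emph{triangle} (exactly three in-edges, with tail-pairs $\{a,b\},\{b,d\},\{d,a\}$). In the star case a single conflict breaks everything: if $\phi(c)\neq\phi(a_{c})$ then each in-edge $a_{c}x\rightarrow c$ already carries the two colours $\phi(c)$ and $\phi(a_{c})$, so I place the single edge $\{c,a_{c}\}$ into $G^{\star}$. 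In the triangle case one checks that two conflicts suffice to break all three in-edges (for instance $\phi(c)\neq\phi(a)$ and $\phi(c)\neq\phi(b)$), while no single conflict can cover all three, so I place the two edges $\{c,a\},\{c,b\}$ into $G^{\star}$.

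With this construction I orient every conflict edge of $G^{\star}$ from the head to its chosen guard-tail. Then each vertex has out-degree at most $1$ when it is a star-head and at most $2$ when it is a triangle-head, whereas its in-degree (how often it is used as a guard) may be arbitrarily large. In the absence of triangle-heads, $G^{\star}$ is therefore a pseudoforest, i.e.\ it admits an orientation of out-degree at most $1$; such a graph is $2$-degenerate, hence $3$-colourable, and in fact $H$ would be $3$-colourable in this case. (One could alternatively derive $2$-colourability of the star-only pieces from Theorem~\ref{H_{1}}, but the guard reduction is more uniform.) The remaining and decisive step is to show that triangle-heads can be absorbed without exceeding four colours.

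I expect the triangle-heads to be the main obstacle. They force out-degree $2$, and a graph admitting an orientation of out-degree at most $2$ is only guaranteed to be $4$-degenerate, which yields the weaker bound $5$; the edges contributed by triangle-heads can in principle accumulate at a popular guard-vertex, so a crude degeneracy count does not close the gap. To recover the sharp value $4$ I would argue that $G^{\star}$ is in fact $3$-degenerate: any subgraph in which every vertex had degree at least $4$ would, by the out-degree bound, have to consist entirely of triangle-heads, each guarding two tails that are themselves triangle-heads of that subgraph, and I would rule this configuration out using the rigid local structure of the tail-triangles $\{a,b\},\{b,d\},\{d,a\}$. Should that contradiction prove elusive, the fallback is to \emph{reroute} one of the two conflicts of each triangle-head, breaking the edge $ab\rightarrow c$ by the tail--tail conflict $\phi(a)\neq\phi(b)$ and the other two by $\phi(c)\neq\phi(d)$, thereby keeping the principal guard graph a pseudoforest and carrying the leftover tail--tail conflicts in a second factor, so that a product of two suitable colourings lands within four colours.
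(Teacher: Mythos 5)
Your structural lemma --- that the in-edges of a vertex pairwise share a tail, so their tail-pairs form an intersecting family of $2$-sets, hence a star or a triangle --- is correct, and it coincides with the first claim in the paper's own proof. Your guard reduction is also sound: every hyperedge is an in-edge of its head, so any coloring in which each head conflicts with its chosen guard(s) is a proper coloring of $H$. The genuine gap is that you never prove $\chi(G^{\star})\le 4$, and this is the decisive step, not a verification. The orientation with out-degree at most $2$ gives $|E(G')|\le 2|V(G')|$ for every subgraph $G'$, hence only $4$-degeneracy and five colors; and out-degree at most $2$ alone provably cannot yield four colors, because $K_{5}$ admits such an orientation. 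So the bound of four must come from the triangle structure combined with a careful \emph{choice} of guards, and that is exactly what is missing. A subgraph of $G^{\star}$ with minimum degree $4$ corresponds to a set of triangle-heads, each guarding two other heads of the set and each guarded exactly twice; nothing in your sketch rules this out, and since the guards are chosen by you, what is really needed is a global selection argument showing that \emph{some} choice of guards avoids all such configurations (or makes $G^{\star}$ $4$-colorable by another route). That is a nontrivial combinatorial problem left entirely open by the proposal.

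The fallback does not close the gap either. There the principal guard graph $G_{1}$ (one out-edge per head) is a pseudoforest and the rerouted tail--tail constraints form a second graph $G_{2}$; a product coloring uses $k_{1}k_{2}$ colors, so staying within four forces both factors to be bipartite. But $G_{1}$ need not be bipartite: three star-heads $u_{1},u_{2},u_{3}$ with $E_{h}(u_{1})=\{u_{2}x\rightarrow u_{1},\,u_{2}x'\rightarrow u_{1}\}$, $E_{h}(u_{2})=\{u_{3}y\rightarrow u_{2},\,u_{3}y'\rightarrow u_{2}\}$, $E_{h}(u_{3})=\{u_{1}z\rightarrow u_{3},\,u_{1}z'\rightarrow u_{3}\}$ satisfy the $I_{0}$-free hypothesis and force the odd cycle $u_{1}u_{2}u_{3}$ in $G_{1}$; likewise $G_{2}$ can contain odd cycles. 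So the product argument yields nine colors, not four. For comparison, the paper takes a different route after the shared structural claim: it first \emph{completes} the hypergraph so that every vertex is either a triangle-head or a full star-head (adding in-edges cannot hurt), then fixes an arbitrary ordering of $V$, splits $E$ into $E_{1},E_{2},E_{3}$ according to whether the head is the smallest-, middle- or largest-indexed vertex of its hyperedge, and runs three greedy passes (red, green, yellow over an all-blue start), one per class; the completion structure is used precisely to kill the last class $E_{2}$. If you want to salvage your reduction, the missing ingredient is a proof that guards can be chosen so that $G^{\star}$ is $4$-colorable, and given the $K_{5}$ obstruction such a proof must exploit the triangle structure in an essential way.
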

Note that three colors might be needed and there is a gap.
\begin{thm}\label{R_{4}}
	Let $H$ be a $2\rightarrow 1$ hypergraph. Suppose that $H$ does not contain the hypergraph $R_{4}$ as subhypergraph, namely if $e_{1},e_{2}\in E(H)$ and $|e_{1}\cap e_{2}|=1$, then the common vertex is either the head-vertex of both hyperedges or a tail-vertex of both hyperedges. Then there exists a proper 3-coloring of the hypergraph $H$.
\end{thm}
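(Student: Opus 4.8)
The plan is to prove the statement by strong induction on the number of vertices of $H$, peeling off one vertex at a time. Call a vertex \emph{mixed} if it is the head-vertex of some hyperedge and the tail-vertex of some (other) hyperedge, and \emph{pure} otherwise. The base of the induction and the case in which $H$ has no mixed vertex are both easy: if every vertex is pure, then the set $V_{H}$ of vertices occurring as a head and the set $V_{T}$ of vertices occurring as a tail are disjoint, so coloring $V_{H}$ (together with any isolated vertices) with color $1$ and $V_{T}$ with color $2$ already gives a proper $2$-coloring, since every hyperedge then has its head in color $1$ and both of its tails in color $2$. So the entire content of the argument is the reduction step when a mixed vertex exists.

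First I would isolate the key \emph{extension lemma}: if $v$ is a mixed vertex of $H$ and $c$ is any proper $3$-coloring of $H-v$ (which is again a $2\rightarrow 1$ hypergraph avoiding $R_{4}$, with one fewer vertex, hence $3$-colorable by the inductive hypothesis), then $c$ extends to $v$. Say a color $\gamma$ is \emph{blocked} for $v$ if some hyperedge $e\ni v$ has both of its vertices other than $v$ colored $\gamma$; a color is available if and only if it is not blocked, so it suffices to show that the three colors are not all blocked. Assume for contradiction that colors $1,2,3$ are blocked, witnessed by hyperedges $e_{1},e_{2},e_{3}\ni v$ with $e_{i}\setminus\{v\}$ monochromatic of color $i$.

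The hard part, and the only place where the $R_{4}$-free hypothesis is used, is ruling out these three blocking edges, and I would split on the roles of $v$ in $e_{1},e_{2},e_{3}$. If two of them, say $e_{i}$ and $e_{j}$, assign $v$ opposite roles (head in one, tail in the other), then they are distinct and meet in $v$; were $v$ their only common vertex, $\{e_{i},e_{j}\}$ would be exactly a copy of $R_{4}$, so they share a further vertex $p$, which lies in $e_{i}\setminus\{v\}$ and in $e_{j}\setminus\{v\}$ and is therefore forced to carry both color $i$ and color $j$ — impossible. Hence $e_{1},e_{2},e_{3}$ all give $v$ the same role, say all have $v$ as a head. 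Now mixedness supplies a hyperedge $f\ni v$ in which $v$ is a tail; for each $i$ the edges $e_{i}$ and $f$ are distinct, share $v$, and again cannot share only $v$ (that would be an $R_{4}$), so they meet in a second vertex $p_{i}\in f\setminus\{v\}$, which has color $i$ because $p_{i}\in e_{i}\setminus\{v\}$. But $f\setminus\{v\}$ has only two vertices, so it cannot contain three vertices of three distinct colors — contradiction; the symmetric argument handles the all-tail case. This proves the lemma, and the induction then closes immediately: peel any mixed vertex $v$, $3$-color $H-v$ by induction, and extend the coloring to $v$ by the lemma.

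I expect the only genuinely delicate point to be the bookkeeping in the extension lemma, namely verifying in each configuration that $R_{4}$-freeness forces an additional shared vertex and that this vertex is either pinned to two incompatible colors or overloads the two-element set $f\setminus\{v\}$ with three colors. Everything else — the disjointness of $V_{H}$ and $V_{T}$ in the pure case, the stability of the $R_{4}$-free property under vertex deletion, and the fact that a set of two or three vertices carries at most that many colors — is routine.
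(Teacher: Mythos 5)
Your proof is correct, but it takes a genuinely different route from the paper. The paper deduces Theorem~\ref{R_{4}} from the more general Theorem~\ref{general_head_tail} (non-uniform directed hypergraphs, any number of head- and tail-vertices per edge, same intersection condition), proved by a global algorithm: fix an arbitrary vertex ordering, split the edges into $E_h$ and $E_t$ according to whether the highest-indexed vertex is a head or a tail, start with all vertices blue, then do one sweep coloring vertices red to destroy monochromatic blue edges of $E_t$ and a second sweep coloring vertices green to destroy monochromatic blue edges of $E_h$; the intersection hypothesis is used to show neither sweep creates a new monochromatic edge. You instead induct on the vertices with a local extension lemma: delete a \emph{mixed} vertex $v$, $3$-color the rest, and show not all three colors can be blocked at $v$, since two blocking edges giving $v$ opposite roles must share a second vertex (pinned to two colors), while three blocking edges giving $v$ the same role each force a second intersection with a witness edge $f$ of the opposite role, overloading the two-element set $f\setminus\{v\}$ with three colors; the pure case is handled by a direct $2$-coloring. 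Your argument is more elementary and yields slightly more in the $2\rightarrow 1$ setting (any proper $3$-coloring of $H-v$ extends across a mixed vertex, and $2$ colors suffice when no vertex is mixed), but it uses $3$-uniformity essentially — the pigeonhole on $|f\setminus\{v\}|=2$ fails for larger edges — so unlike the paper's ordering argument it does not give Theorem~\ref{general_head_tail}.
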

Note that this is best. Theorem \ref{R_{4}} can be further generalized, see later Theorem \ref{general_head_tail}.
The new results and the previous result about $H_{1}$ are contained in Table \ref{table2}, where $\chi(H)$ denotes the chromatic number of the hypergraph $H$.\newline
\begin{table}[h!!]
	\centering
	
	\begin{tabular}{|l|cc|}
		\hline
		$F$ & $\sup\{\chi(H):$&$H$ avoids $F$ $\}$ \\\hline
		$H_{2}$ & $\infty$&(Claim \ref{H_{2}} )\\\hline
		$I_{1}$ & $\infty$&(Claim \ref{I_{1},R_{3},E})\\\hline
		$R_{3}$ & $\infty$&(Claim \ref{I_{1},R_{3},E})\\\hline
		$E$ & $\infty$&(Claim \ref{I_{1},R_{3},E} )\\\hline
		$I_{0}$ & $3\leq,\leq 4$&(Theorem \ref{I_{0}})\\\hline
		$H_{1}$ & 2&(Theorem \ref{one_head})\\\hline
		$R_{4}$ & 3&(Theorem \ref{R_{4}})\\\hline
	\end{tabular}
	\caption{}
	\label{table2}
\end{table}

For more about how these problems are related to other problems see \cite{2}, where relations to \cite{3,4,6} are detailed.

It is also interesting to consider the case when we avoid a hypergraph with more than one hypergraph. A $\emph{good coloring}$ of a $3$-uniform hypergraph $H$ is the following \cite{7}.
Consider the graph $G_{H}$ whose vertex set is $V(H)$ and its edges are those vertex pairs $u,v\in V(H)$ for which there exists a hyperedge $e\in E(H)$ such that $\{u,v\}\subset e$. A good coloring of the hypergraph $H$ is an edge-coloring of the graph $G_{H}$ with colors red and blue and a direction of the red edges such that every hyperedge $e=\{u,v,w\}\in E(H)$, the edges $uv$ and $uw$ are red, the edge $vw$ is blue in the graph $G_{h}$ and the two red hyperedges are directed to $u\rightarrow v$ and $u\rightarrow w$ for some ordering of $u,v,w$. 
\begin{thm}\cite{7}{
	Let $H$ be a 3-uniform hypergraph on $n$ vertices. If $H$ admits a good coloring then the number of hyperedges of $H$ is at most $f(n)$, where $f(0)=1$ and $f(n)=\max_{k\in [n-1]} {k \choose 2}\left(n-k\right)+f(n-k)$.}
\end{thm}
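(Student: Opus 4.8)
The plan is to reformulate a good colouring combinatorially and then bound $|E(H)|$ by induction on $n$, mirroring the recursion that defines $f$. First I would record the local structure a good colouring forces: for $e=\{u,v,w\}\in E(H)$ the edges $uv,uw$ are red and oriented $u\to v$, $u\to w$, while $vw$ is blue. Thus every hyperedge acquires a distinguished \emph{head} $u=h(e)$ (the apex of the red cherry), its two \emph{tails} are joined by a blue edge, and both red arcs point from the head to the tails. In particular a hyperedge is determined by the pair (head, blue tail-pair), so $|E(H)|=\sum_{u\in V}\#\{\text{hyperedges with head } u\}$, and the number of hyperedges with head $u$ is at most the number of blue edges lying inside the red out-neighbourhood $N^{+}(u)$.

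The induction target is the unrolled recursion: an optimal $f(n)$ corresponds to an ordered partition $V=B_1\cup\dots\cup B_m$ in which the two tails of every hyperedge lie in a common block $B_j$ while its head lies in a strictly later block, block $B_j$ contributing $\binom{|B_j|}{2}\,|B_{j+1}\cup\dots\cup B_m|$. So the natural inductive step is to peel off a ``first block'': I would look for a nonempty proper set $A\subseteq V$ with the \emph{clean separation property} that every hyperedge meeting $A$ has both of its tails in $A$ and its head in $B:=V\setminus A$. Given such an $A$ with $|A|=k$, the hyperedges meeting $A$ are counted by (blue edge inside $A$) $\times$ (head in $B$), hence number at most $\binom{k}{2}(n-k)$; the remaining hyperedges lie entirely in $B$, and restricting the red/blue colouring and orientation to $G_{H[B]}$ is again a good colouring, so by induction there are at most $f(n-k)$ of them. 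Summing and taking the maximum over $k\in[n-1]$ gives $|E(H)|\le f(n)$; the base cases $n\le 3$ (where $f(3)=1$ and trivially $|E|\le 1$) are immediate, and the additive constant coming from $f(0)=1$ only weakens the bound.

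I expect the single real obstacle to be producing the clean block $A$. The property forces $A$ to contain no head at all (if $u\in A$ were the head of some $e$, then $e$ meets $A$ but its head is not in $B$) and to be closed under tail-pairs (no hyperedge may have exactly one tail in $A$). To locate such a set I would work with two auxiliary objects: the digraph $D^{*}$ on $V$ whose arcs point from each tail of a hyperedge to that hyperedge's head, and the \emph{tail-pair graph} $T$ recording which blue edges actually serve as a base. Then the clean-separation sets are exactly the nonempty proper $A$ that are simultaneously a union of connected components of $T$ and a set of in-degree-zero vertices of $D^{*}$ (equivalently, pure tails, i.e.\ vertices that head no hyperedge).

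The difficulty is that $D^{*}$ need not be acyclic: three hyperedges can force a directed triangle of tail-to-head arcs, so a global topological order of the blocks need not exist and a $T$-component need not consist of pure tails. My plan for clearing this hurdle is to pick $A$ by a minimality principle. I would start from the pure tails — any red sink is one, and I would have to argue separately that a nonempty pure-tail set always exists when $E(H)\ne\varnothing$ — and then close this set up under $T$, aiming to keep the closure inside the pure-tail part. Whenever the closure threatens to drag in an internal head or a straddling tail-pair, I would locally re-orient a single red arc (pushing the offending hyperedge's head into a later block) or delete a dependent hyperedge, arguing that such a modification never decreases $|E(H)|$ while restoring the separation. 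Showing that this closure-and-recolouring procedure terminates with a nonempty \emph{proper} clean block is, I expect, the crux of the whole argument; once it is available, the counting and induction of the second paragraph finish the proof.
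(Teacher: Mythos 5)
First, a point of reference: the paper does not prove this statement at all --- it is quoted from \cite{7} as a known result --- so your proposal has to be judged on its own merits rather than against a proof in the text.

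Your reduction and your counting step are fine: if a nonempty proper ``clean'' set $A$ of size $k$ exists (every hyperedge meeting $A$ has both tails in $A$ and its head outside), then each such hyperedge is determined by a pair inside $A$ together with a head outside, giving at most $\binom{k}{2}(n-k)$ of them, the restriction of the colouring to $V\setminus A$ is again a good colouring, and the recursion closes. The genuine gap is exactly where you locate it, and it is fatal as stated: \emph{a clean block need not exist for the given good colouring}. Take $V=\{1,2,3,x,y,z\}$ and hyperedges $e_1=\{1,2,x\}$, $e_2=\{2,3,y\}$, $e_3=\{3,1,z\}$, coloured with heads $1,2,3$ respectively: red arcs $1\to 2$, $1\to x$, $2\to 3$, $2\to y$, $3\to 1$, $3\to z$ and blue edges $2x$, $3y$, $1z$. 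All nine pairs are distinct, so this is a consistent good colouring. Now no nonempty $A$ can be clean: $A$ cannot contain $1$, $2$ or $3$ (each is the head of a hyperedge meeting $A$), while $x\in A$ forces $2\in A$ by tail-pair closure of $e_1$, and similarly $y$ forces $3$ and $z$ forces $1$. This is precisely the directed triangle in your $D^{*}$ that you flagged, and it kills the induction at its first step.

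The repair you sketch does not close this gap. Re-orienting a single red arc is not a local move: it destroys the good-colouring pattern of every hyperedge using that arc, so you are forced to re-head hyperedges and recolour further pairs (in the example, flipping $3\to 1$ forces $e_3$ to be re-headed at $1$, recolouring $1z$ and $3z$ too). It happens that in this small example such a re-heading produces a colouring with a clean block, but the statement you would actually need --- every good-colourable $H$ admits \emph{some} good colouring with a nonempty proper clean block --- is essentially the whole difficulty of the theorem, and nothing in the proposal proves it or even shows the re-heading process terminates. The alternative move, ``delete a dependent hyperedge, arguing that such a modification never decreases $|E(H)|$,'' is self-contradictory: deletion decreases $|E(H)|$ by definition, and $|E(H)|$ is the very quantity being bounded, so any argument discarding hyperedges must account for them separately. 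In short, the proposal is a correct reduction plus a correct identification of the crux, but the crux is unresolved, and the object your induction relies on provably does not exist in general.
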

It is easy to check that a 3-uniform hypergraph $H$ has a good coloring if and only if can be given a $2\rightarrow 1$ direction of each hyperedge of $H$ such that the resulting $2\rightarrow 1$ hypergraph avoids the hypergraphs $R_{3}$ and $E$. Thus, if a $2\rightarrow 1$ hypergraph $H$ on $n$ vertices does not contain any of the hypergraphs $R_{3}$ and $E$ as subhypergraphs, then $|E(H)|\leq f(n)$.
By Claim \ref{I_{1},R_{3},E} for such families no proper-colorability result is possible. On the other hand, while avoiding hypergraphs $I_{0}$ and $R_{4}$ separately is not, but avoiding both hypergraphs is a sufficient condition for proper 2-colorability.
\begin{thm}\label{I_{0},R_{4}}
	{
		Let $H$ be a $2\rightarrow 1$ hypergraph and suppose that $H$ avoids both $I_{0}$ and $R_{4}$, namely if $e_{1},e_{2}\in E(H)$ with $|e_{1}\cap e_{2}|=1$, then the common vertex is a tail-vertex of both hyperedges. Then $H$ admits a proper 2-coloring.
	}
\end{thm}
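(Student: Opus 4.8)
The plan is to produce a proper $2$-coloring by a local recoloring procedure that starts from a trivial coloring and repeatedly repairs monochromatic hyperedges by recoloring their \emph{heads}. The guiding observation is that a hyperedge $ab\to c$ fails to be properly colored only when $a$, $b$ and $c$ all receive the same color; in particular, if at some point such an edge is monochromatic of color $\chi$, recoloring its head $c$ with the opposite color immediately repairs it. The whole difficulty is to show that this single recoloring never creates a new monochromatic edge, and this is exactly where the two hypotheses (the absence of $I_0$ and of $R_4$) are used.

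Concretely, I would first color every vertex with the same color, so that initially every hyperedge is monochromatic, and then iterate the following step: while some hyperedge $e=ab\to c$ is monochromatic, say of color $\chi$, recolor its head $c$ with the other color. To see that the process terminates, I would track the number of monochromatic hyperedges and prove that a single recoloring of $c$ strictly decreases it; since this count is a nonnegative integer, the procedure stops, and when it stops no hyperedge is monochromatic, i.e.\ the coloring is proper. The content of the argument is therefore the claim that, after recoloring $c$, \emph{no} hyperedge containing $c$ is monochromatic, while hyperedges not containing $c$ are unaffected.

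The verification splits according to the role of $c$ in each hyperedge $f$ that contains it, and in a $2\to 1$ edge $c$ is either the head or a tail. First, $e=ab\to c$ itself is repaired by construction. Second, if $f=a'b'\to c$ also has $c$ as its head, then since $H$ avoids $I_0$ the two same-head edges $e,f$ cannot meet only in $c$, so they share a tail $x\in\{a,b\}\cap\{a',b'\}$; as $x$ has the color $\chi$ of the old monochromatic edge, $f$ now has a tail of color $\chi$ and a head of the opposite color, hence is non-monochromatic. Third, if $f=cw\to z$ has $c$ as a tail, then since $H$ avoids $R_4$ the edges $e,f$ again share a second vertex, so $\{a,b\}\cap\{w,z\}\neq\emptyset$: if $w\in\{a,b\}$ then $w$ has color $\chi$ and the two tails $c,w$ of $f$ now differ, while if $z\in\{a,b\}$ then the head $z$ of $f$ has color $\chi$ and differs from the freshly recolored tail $c$; in either case $f$ is non-monochromatic. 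This exhausts the cases.

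The main obstacle is precisely this case analysis: one must rule out every way in which flipping a head could turn some other edge monochromatic. The forbidden configuration $I_0$ controls the interaction of two edges sharing the same head, forcing a common tail that blocks a monochromatic outcome, and the forbidden configuration $R_4$ controls the interaction between the head-role and the tail-role of a single vertex, again forcing an overlap that keeps the affected edge split. A minor point to record along the way is that the recoloring target is unambiguous: a vertex cannot simultaneously be the head of a monochromatic $\chi$-edge and of a monochromatic edge of the opposite color, since by $I_0$-avoidance these two edges would share a tail carrying both colors. Hence the invariant ``each recoloring removes at least one monochromatic edge and creates none'' is maintained throughout, which is all that the termination argument requires.
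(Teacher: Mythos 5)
Your proof is correct. The key step checks out: if $e=ab\rightarrow c$ is monochromatic of color $\chi$ and you flip its head $c$, then any other hyperedge $f$ containing $c$ must meet $e$ in a second vertex (a lone common vertex would be the head of $e$, contradicting the hypothesis that a single common vertex is a tail of both edges), and that second vertex lies in $\{a,b\}$, retains color $\chi$, and therefore keeps $f$ bichromatic; so each flip removes at least one monochromatic edge and creates none, and the process stops after at most $|E(H)|$ flips at a proper 2-coloring. (Your "minor point" about ambiguity of the recoloring target is superfluous: the head of a monochromatic $\chi$-edge itself has color $\chi$, so it trivially cannot also head a monochromatic edge of the opposite color.) Your route is genuinely different from the paper's, though both repair monochromatic edges at their heads and invoke the hypothesis through the same two-edge configurations. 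The paper fixes an arbitrary ordering $v_{1},\dots,v_{n}$, starts with all vertices blue, and makes a single monotone pass, coloring $v_{i}$ red exactly when some hyperedge with head $v_{i}$ is monochromatic blue at that moment; properness is then argued globally on the final coloring: every edge acquires a red vertex, and a monochromatic red edge is excluded by considering its largest-index vertex $v$ together with the blue edge $f$ with $h(f)=v$ that triggered the recoloring, which would force two edges meeting in exactly one vertex that is a head of one of them. So the paper's proof is a one-pass greedy in which each vertex changes color at most once and the ordering does the work, whereas yours is an order-free local search with a potential function, in the spirit of the classical proof of Lovász's Theorem \ref{one_intersection}, in which a vertex may be recolored several times. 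Your argument is arguably more symmetric and self-contained (and it extends verbatim to one-headed edges of any size at least three); the paper's ordered-sweep formulation has the advantage of being the template that, with the addition of back-tracking, is elaborated into the proof of the harder Theorem \ref{one_head}.
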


\section{Proof of Theorem \ref{one_head}}
We show that Conjecture \ref{conjecture} is true in a special case, namely it is sufficient to assume that every hyperedge has exactly one head-vertex. The condition of the conjecture that every hyperedge has more tail-vertices than head-vertices is equivalent in this case to that every hyperedge has at least two tail-vertices.
Let $V$ be the set of vertices of the hypergraph $H$ and assume that $|V|=n$. In the following, denote the head-vertex of a hyperedge $e$ by $h(e)$ and the set of its tail-vertices by $t(e)$. If $e_{1}$ and $e_{2}$ are two hyperedges for which $e_{1}\subseteq e_{2}$, then we can drop the hyperedge $e_{2}$, since if the hyperedge $e_{1}$ is non-monochromatic then $e_{2}$ is not monochromatic. So we can assume that neither of the hyperedges contains the other.\newline
Take two colors, red and blue. At the beginning let all the vertices be blue. 
Before the i-th step we have processed the vertices $v_1,...v_{i-1}$, in the i-th step either we have to process the single vertex already fixed as $v_{i}$ in the previous step, or we can choose an arbitrary not yet processed vertex as $v_{i}$. 
Let $v_{1}$ be an arbitrary vertex of $H$.
Starting from the vertex $v_{1}$, perform the following two steps for each vertex $v_{i}$ one-by-one:\newline
	Step i: We check if there exists a hyperedge $e$ which is monochromatic blue, $v_{i}$ is the tail-vertex of the hyperedge $e$ and $t(e)\subseteq \{v_{1},v_{2},...,v_{i}\}$. 
	If there is no such hyperedge, then we arbitrarily choose a vertex $v_{i+1}\in V\setminus \{v_{1},v_{2},...,v_{i}\}$ and proceed to Step $i$.
	Otherwise if such a hyperedge exists, then we color the vertex $v_{i}$ red and we check whether the coloring of the vertex $v_{i}$ to red creates a monochromatic red hyperedge, if so we color the vertex $v_{i-1}$ blue. If $e$ can be chosen such that $h(e)\in V\setminus \{v_{1},v_{2},...,v_{i}\}$ is satisfied, then the point $h(e)$ is chosen as vertex $v_{i+1}$. Otherwise $v_{i+1}$ is chosen arbitrarily from the set $V\setminus \{v_{1},v_{2},...,v_{i}\}$, provided $i\leq n-1$.
	
	We are left to prove that this simple coloring algorithm gives a proper 2-coloring.
	Notice that the index of a vertex cannot change after it has been processed, and the color of the vertex $v_{j}$ can only change to red during its processing and can only change to blue during processing of the vertex $v_{j+1}$, so the color of a vertex can change at most twice. The set of the hyperedges can be divided into two parts according to whether the vertex of the hyperedge with the highest index is a tail-vertex or a head-vertex of the hyperedge. Let $E_{t}$ be the set of hyperedges in which the vertex with the highest index is a tail-vertex of the hyperedge and let $E_{h}$ be the set of hyperedges in which the vertex with the highest index is a head-vertex of the hyperedge. Let the tail-vertex of a hyperedge $e$ with the largest index denoted by $t_{max}(e)$.
	\begin{claim}\label{monochromatic_red}
		
		During the coloring algorithm a monochromatic red hyperedge whose vertex with the largest index is a tail-vertex of the hyperedge cannot be created.
	\end{claim}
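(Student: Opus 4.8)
The plan is to argue by contradiction and to pin down the exact instant at which a forbidden edge could arise. First I would record the bookkeeping fact that the only operation turning a vertex red is the action ``color $v_i$ red'' in Step $i$; every recoloring in the algorithm produces a \emph{blue} vertex. Hence a monochromatic red hyperedge can come into being only at a moment when some vertex is freshly colored red, and since that action changes the color of $v_i$ alone, any hyperedge that becomes monochromatic red at Step $i$ must contain $v_i$. Suppose then, for contradiction, that coloring $v_i$ red creates a monochromatic red hyperedge $f$ lying in $E_t$. Every vertex of $f$ is now red, and a vertex can be red only after it has been processed, so all vertices of $f$ lie in $\{v_1,\dots,v_i\}$ and $v_i$, having the largest index available, is the highest-index vertex of $f$. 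As $f\in E_t$, this highest-index vertex is a tail-vertex, so $v_i=t_{max}(f)$ and $v_i\in t(f)$.

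Next I would invoke the reason $v_i$ was colored red in the first place. By the rule of Step $i$, this happens only because some hyperedge $g$ is monochromatic blue at the start of Step $i$ with $v_i\in t(g)$ and $t(g)\subseteq\{v_1,\dots,v_i\}$. The two edges are distinct: if $f=g$, then coloring $v_i$ red leaves every other vertex of $g$ blue, so $g$ could not have just become monochromatic red. Thus $f$ and $g$ are distinct hyperedges both having $v_i$ as a tail-vertex.

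The contradiction now follows from the intersection hypothesis of Theorem \ref{one_head}. If $|f\cap g|=1$, their unique common vertex $v_i$ would be forced to be a head-vertex of at least one of $f,g$, contradicting that $v_i$ is a tail-vertex of both; hence $|f\cap g|\ge 2$ and I may choose $w\in (f\cap g)\setminus\{v_i\}$. On the one hand $w$ is red, since $f$ is monochromatic red; on the other hand $w$ was blue at the start of Step $i$ because $g$ was monochromatic blue, and coloring $v_i$ does not affect $w\neq v_i$. This assigns $w$ two colors at once, the desired contradiction.

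Finally I would check that no other instant needs inspection: before Step $i$ the vertex $v_i$ is still blue, so $f$ is not yet all red, while after Step $i$ the only changes to vertices of index at most $i$ are recolorings to blue, so the red part of a fixed $f\in E_t$ with $t_{max}(f)=v_i$ cannot grow. Thus the sole candidate moment for $f$ to be monochromatic red is immediately after $v_i$ is colored red, exactly the case excluded above. The step I expect to require the most care is this timing argument—guaranteeing that the triggering blue edge $g$ is present at precisely the instant $f$ would turn monochromatic red—after which the two-intersection hypothesis closes the argument in a single line.
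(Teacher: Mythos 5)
Your proof is correct and takes essentially the same approach as the paper's: both identify the triggering monochromatic blue hyperedge guaranteed by the rule of Step $i$, use the colors at that instant to force the two edges to meet only in $v_i$, and then contradict the one-intersection hypothesis since $v_i$ is a tail-vertex of both. The only differences are cosmetic --- you split into the cases $|f\cap g|=1$ and $|f\cap g|\ge 2$ where the paper argues directly that the intersection is exactly $\{v_i\}$, and you make explicit the timing and ``red only at processing'' bookkeeping that the paper leaves implicit.
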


	\begin{proof}{
		Suppose for a contradiction that there exists such a hyperedge $g$, all vertices of $g$ are colored red and the vertex of hyperedge $g$ with the highest index is a tail-vertex of $g$. Then, the hyperedge $g$ is made monochromatic red by coloring the vertex $t_{max}(g)$ red. Since we have colored the vertex $t_{max}(g)$ red, there is a hyperedge $f$ which was monochromatic blue and $t_{max}(f)=t_{max}(g)$. In this case, $g$ and $f$ intersect in one vertex, since all vertices of $f$ except the vertex $t_{max}(f)$ were blue and all vertices of $g$ were red when $t_{max}(f)$ is colored red, so $f$ and $g$ have exactly one common vertex, $t_{max}(f)$, which is a tail-vertex of both hyperedges. This is a contradiction, so during the algorithm, it is never possible for a hyperedge from the set $E_{t}$ to become monochromatic red.}\end{proof}
	
		Claim \ref{monochromatic_red} in other words says that in the coloring algorithm, if a monochromatic red hyperedge is created, then its vertex with the largest index can only be the head-vertex of the hyperedge. In the following we rule out this case too.

	\begin{claim}\label{consecutive}{
			If all the tail-vertices of a hyperedge $g$ are colored red during the algorithm and the vertex with the highest index is the head-vertex of $g$, then the vertices of $g$ are consecutive vertices.
		}
	\end{claim}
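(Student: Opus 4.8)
The plan is to prove the statement by tracking, one tail-vertex at a time, how the forcing rule of the algorithm determines which vertex is processed immediately afterwards. Label the vertices of $g$ as the tail-vertices $t_1, t_2, \ldots, t_k$, indexed so that their processing indices satisfy $p_1 < p_2 < \cdots < p_k$, together with the head-vertex $h = h(g)$, whose processing index $p_{k+1}$ is the largest by hypothesis; note $k \ge 2$ since every hyperedge has at least two tail-vertices. The goal is to show $p_{j+1} = p_j + 1$ for every $j$, which is exactly the assertion that the vertices of $g$ occupy consecutive indices. A preliminary observation makes the coloring transparent: since a vertex can change color at most twice (blue to red during its own processing, then possibly red back to blue during the processing of the next vertex), any vertex that is red at the end is red continuously from its own step onward. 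Hence at the start of step $p_j$ the tails $t_1, \ldots, t_{j-1}$ are already red, $t_j$ is still blue (about to be processed), and $t_{j+1}, \ldots, t_k, h$ are unprocessed and blue; so the blue vertices of $g$ at that moment are exactly $\{t_j, t_{j+1}, \ldots, t_k, h\}$.

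The heart of the argument is the analysis of step $p_j$. Because $t_j$ is colored red there, some hyperedge $e$ triggers it: $e$ is monochromatic blue just before step $p_j$, the vertex $t_j$ is a tail-vertex of $e$, and $t(e) \subseteq \{v_1, \ldots, v_{p_j}\}$. Since $e$ is entirely blue, $e \cap g$ lies in the blue part $\{t_j, t_{j+1}, \ldots, t_k, h\}$ of $g$. Moreover $e \neq g$: for $j < k$ the tail $t_{j+1}$ is not yet processed, so $t(g) \not\subseteq \{v_1, \ldots, v_{p_j}\}$, while for $j = k$ the edge $g$ already contains the red vertex $t_1$ and so is not monochromatic blue. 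Now $t_j$ is a tail-vertex of both $e$ and $g$, so the intersection hypothesis forbids $e \cap g = \{t_j\}$, giving $|e \cap g| \ge 2$. On the other hand every tail-vertex of $e$ has index at most $p_j$, whereas $t_{j+1}, \ldots, t_k, h$ all have index exceeding $p_j$; hence none of these later vertices is a tail-vertex of $e$, and at most one of them can equal the single head-vertex of $e$. Combining these constraints forces $e \cap g = \{t_j, y\}$ for a unique $y \in \{t_{j+1}, \ldots, t_k, h\}$ with $y = h(e)$.

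Since $y = h(e)$ is unprocessed, such a trigger has an unprocessed head, so the algorithm does not resort to an arbitrary choice but sets $v_{p_j+1}$ to the head of some valid trigger with unprocessed head. By the previous paragraph \emph{every} valid trigger at step $p_j$ has its head in $\{t_{j+1}, \ldots, t_k, h\}$, so no matter which trigger the algorithm selects, $v_{p_j+1}$ lands in $\{t_{j+1}, \ldots, t_k, h\}$. These vertices occupy indices $p_{j+1} < \cdots < p_{k+1}$, all greater than $p_j$; hence the vertex placed at index $p_j + 1$ must be the one of smallest index among them, which yields $p_{j+1} = p_j + 1$ and $v_{p_j+1} = t_{j+1}$ (and $v_{p_k+1} = h$ when $j = k$). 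Running this over $j = 1, \ldots, k$ gives $p_{j+1} = p_j + 1$ throughout, so the vertices of $g$ are consecutive.

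I expect the main obstacle to be the bookkeeping around the trigger $e$: the algorithm never names a canonical triggering hyperedge, so one must verify that \emph{every} admissible trigger at step $p_j$ shares the same structure, namely $e \cap g = \{t_j, h(e)\}$ with $h(e)$ a later vertex of $g$, and in particular that each such trigger has an unprocessed head, so that the forcing rule (rather than an arbitrary pick) governs $v_{p_j+1}$. This uniformity is what lets the positional-minimality step apply regardless of the algorithm's internal choices; once it is in place, the remainder is immediate.
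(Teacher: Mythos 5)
Your argument is, at its core, the same as the paper's: at the step where each tail of $g$ is colored red, analyze an admissible trigger hyperedge, use the one-intersection hypothesis to show its head must be a later, hence unprocessed, vertex of $g$, and then invoke the algorithm's forcing rule to conclude that the very next processed vertex is again a vertex of $g$. Your packaging is slightly cleaner — you establish the structural fact for \emph{every} admissible trigger and replace the paper's induction by the positional-minimality observation — but this is a streamlining, not a different route.

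There is, however, one genuine slip in the color bookkeeping. You justify ``at the start of step $p_j$ the tails $t_1,\dots,t_{j-1}$ are already red'' by the observation that a vertex which is red \emph{at the end of the algorithm} is red continuously from its own step onward. The hypothesis of Claim \ref{consecutive} does not say the tails are red at the end, and in the paper's actual application (Corollary \ref{unique}) they are not: the moment $g$ becomes monochromatic red, the algorithm recolors $t_{max}(g)$ blue, so the last tail is blue at the end of the algorithm, and the statement you actually prove could not be invoked there. The redness must instead be anchored at the moment when all tails of $g$ are simultaneously red (this is how the paper reads its own hypothesis, ``all the vertices of $g$ will be red''): since a vertex's color can change at most twice (blue to red at its own step, red back to blue only at the following step), each tail is red without interruption from its own processing until that moment, which occurs at or after step $p_k \geq p_j$. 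This repaired observation yields exactly the two facts you use — the earlier tails are red at the start of step $p_j$, and $t_1$ is red at step $p_k$ in your argument that $e \neq g$ — and with it the rest of your proof goes through unchanged.
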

	\begin{proof}{ Let $t_{g}$ denote the number of tail-vertices of the hyperedge $g$, from the condition of the theorem we know that $t_{g}\geq 2$.
		We prove that for each $k\leq t_{g}$, the vertices of the hyperedge $g$ with the $k+1$ smallest indices are consecutive. We prove this statement by induction. First, consider the case $k=1$.\newline
		Since $g\in E_{h}$, we processed the head-vertex of $g$ last. Let $t_{j}(g)$ denote the tail-vertex of the hyperedge $g$ with the $j$-th smallest index. Since the color of a vertex can change from blue to red at most once and all the vertices of $g$ will be red, we color the vertex $t_{1}(g)$ red when it is processed. Hence there exists a hyperedge $e_{1}$ such that $t_{max}(e_{1})=t_{1}(g)$ and $e_{1}$ was monochromatic blue before processing the vertex $t_{1}(g)$. If the index of $h(e_{1})$ is smaller than the index of $t_{max}(e_{1})$, then the only common vertex of $e_{1}$ and $g$ is $t_{1}(g)$, which is the tail-vertex of both, this is a contradiction. So $h(e_{1})$ has a larger index than $t_{max}(e_{1})$. Hence there exists also a hyperedge $e_{1}'$ for which $t_{max}(e_{1}')=t_{1}(g)$, $h(e_{1}')$ is the neighbor of the vertex $t_{max}(e_{1}')$ with higher index and $e_{1}'$ was monochromatic blue before $t_{1}(g)$ was processed. In this case, $h(e_{1}')$ is also a vertex of the hyperedge $g$, because otherwise the only common vertex of $e_{1}'$ and $g$ would be $t_{1}(g)$, which is the tail-vertex of both, which would be a contradiction. It follows from these that $t_{2}(g)=h(e_{1}')$, so the two tail-vertices of $g$ with the lowest indices are consecutive vertices. \newline 
		The case $k>1$ is handled in a similar way.
		Let $j < t_{g}$ and suppose that for $k=j$ the statement is true, so the $j+1$ vertices of the hyperedge $g$ with the smallest indices are consecutive vertices. Since $j<t_{g}$ and $g\in E_{h}$, the $(j+1)$-th processed vertex of $g$ is a tail-vertex of $g$. All the vertices of $g$ can be red if and only if at that moment the vertex $t_{j+1}(g)$ is colored red, the vertices of $g$ with the $j$ smallest indices are red. The condition for the vertex $t_{j+1}(g)$ to be red is that there exists an edge $e_{j+1}$ for which $t_{max}(e_{j+1})=t_{j+1}(g)$ and $e_{j+1}$ is monochromatic blue before the processing of the vertex $t_{j+1}(g)$. If the index of $h(e_{j+1})$ is smaller than the index of $t_{max}(e_{j+1})$, then the vertex of $e_{j+1}$ with the highest index is $t_{max}(e_{j+1})$. Before the processing the vertex $t_{j+1}(g)=t_{max}(e_{j+1})$ all vertices of $g$ with index less than $t_{j+1}(g)$ are red, and all vertices of $e_{j+1}$ are blue and have index no greater than $t_{j+1}(g)$, which implies that the only common vertex of $g$ and $e_{j+1}$ is $t_{j+1}(g)$. This vertex is also a tail-vertex of the hyperedge $e_{j+1}$ and $g$, which is a contradiction. 
		So $h(e_{j+1})$ has a larger index than $t_{max}(e_{j+1})$. Hence there exists a hyperedge $e_{j+1}'$ also such that $t_{max}(e_{j+1}')=t_{j+1}(g)$, $h(e_{j+1}')$ is the neighbor of the vertex $t_{max}(e_{j+1}')$ with the higher index and $e_{j+1}'$ was monochromatic blue before the vertex $t_{j+1}(g)$ was processed. If $h(e_{j+1}')$ is not a vertex of the hyperedge $g$, then the only common vertex of $e_{j+1}'$ and $g$ is $t_{j+1}(g)$, since before the processing of the vertex $t_{j+1}(g)$ the vertices of the hyperedge $g$ with index less than $t_{j+1}(g)$ were red, and the vertices of the hyperedge $e_{j+1}'$ with index less than $t_{j+1}(g)$ were blue and the hyperedge $e_{j+1}'$ has only one vertex with index greater than the index of $t_{j+1}(g)$. Then we get a contradiction, since $t_{j+1}(g)$ is a tail-vertex of both hyperedges. So $h(e_{j+1}')$ is a vertex of $g$, which is the neighbor of $t_{j+1}(g)$ with higher index. Using the inductive assumption, we obtain that the $j+2$ vertices of the hyperedge $g$ with the smallest indices are consecutive. \newline 
		So we proved that for every $k\leq t_{g}$ the $k+1$ vertices of $g$ with smallest indices are consecutive.
		In case $k=t_{g}$, we obtain that the vertices of $g$ are consecutive.
		}\end{proof}
	
	\begin{corollary}\label{unique}
		If the algorithm creates a monochromatic red hyperedge, then its vertices are consecutive vertices.
	\end{corollary}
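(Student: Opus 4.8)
The plan is to derive this as an immediate consequence of the two preceding claims, by a simple case analysis on the nature of the highest-index vertex of the offending hyperedge. Suppose the algorithm creates a monochromatic red hyperedge $g$; I want to show its vertices are consecutive. First I would observe that every hyperedge falls into exactly one of the two classes $E_{t}$ and $E_{h}$, according to whether the vertex of largest index in the hyperedge is a tail-vertex or the head-vertex. So it suffices to rule out $g\in E_{t}$ and then invoke Claim \ref{consecutive} for the remaining case $g\in E_{h}$.

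The case $g\in E_{t}$ is excluded directly by Claim \ref{monochromatic_red}, which says precisely that a monochromatic red hyperedge whose highest-index vertex is a tail-vertex cannot be produced by the algorithm. Hence the highest-index vertex of $g$ must be its head-vertex, i.e.\ $g\in E_{h}$.

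It remains to check that the hypotheses of Claim \ref{consecutive} are met. Since $g$ is monochromatic red, in particular all of its tail-vertices have been colored red by the algorithm; combined with the conclusion of the previous paragraph that the vertex of $g$ with the highest index is its head-vertex, both conditions of Claim \ref{consecutive} hold for $g$. Applying that claim yields that the vertices of $g$ are consecutive, which is exactly the assertion of the corollary.

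I do not expect any real obstacle here: the corollary is purely a packaging of Claims \ref{monochromatic_red} and \ref{consecutive}, and the only thing to be careful about is the clean dichotomy $E(H)=E_{t}\sqcup E_{h}$ so that ruling out one class forces the other. All the genuine work has already been done inside those two claims.
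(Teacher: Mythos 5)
Your proof is correct and follows exactly the paper's own argument: use Claim \ref{monochromatic_red} to rule out $g\in E_{t}$, conclude $g\in E_{h}$, and then apply Claim \ref{consecutive} since all tail-vertices of a monochromatic red hyperedge are in particular red. No gaps; the verification of the hypotheses of Claim \ref{consecutive} is handled just as in the paper.
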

	\begin{proof}{
		Suppose that the algorithm creates a monochromatic red hyperedge, denoted by $g$. It follows from Claim \ref{monochromatic_red} that $g\in E_{h}$. Since all the tail-vertices of $g$ are colored red and the vertex of $g$ with the highest index is the head-vertex of $g$ we can apply Claim \ref{consecutive}, which says that in this case the vertices of $g$ are consecutive.}\end{proof}

	Suppose that by coloring the vertex $v_{j}$ to red, the hyperedges $e_{1}$ and $e_{2}$ both become monochromatic red. From Claim \ref{monochromatic_red} we know that $e_{1},e_{2}\in E_{h}$. Then, $h(e_{1})=h(e_{2})=v_{j}$, and by Corollary \ref{unique}, $e_{1}$ and $e_{2}$ are also composed of consecutive vertices. It follows that $e_{1}\subseteq e_{2}$ or $e_{2}\subseteq e_{1}$. Since we have assumed that neither of the hyperedges contains the other, $e_{1}$ and $e_{2}$ denote the same hyperedge.
	
	\begin{corollary}\label{at_most_one}{
		During the algorithm coloring a vertex red can create at most one monochromatic red hyperedge.}
	\end{corollary}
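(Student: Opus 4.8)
The plan is to argue directly, deriving the statement as a short bookkeeping consequence of the claims already proved rather than by any new coloring analysis; indeed the argument is essentially the one assembled in the paragraph preceding the statement. First I would fix the scenario: suppose that the single act of coloring some vertex $v_j$ red turns two hyperedges $e_1$ and $e_2$ monochromatic red at the same time, and the goal is to show $e_1 = e_2$.

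The first substantive step is to determine the position of $v_j$ inside each $e_i$. Since a vertex is colored red only during its own processing step, every red vertex has already been processed, so all vertices of $e_1$ and of $e_2$ have index at most $j$. As each $e_i$ becomes monochromatic red only at the moment $v_j$ is colored, $v_j$ must itself be a vertex of each; combined with the previous observation, $v_j$ is therefore the highest-index vertex of both hyperedges. Claim \ref{monochromatic_red} forbids a newly created monochromatic red hyperedge from having its highest-index vertex in the tail, so $e_1, e_2 \in E_h$ and in particular $h(e_1) = h(e_2) = v_j$.

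Next I would apply Corollary \ref{unique}: since $e_1$ and $e_2$ are created as monochromatic red hyperedges, each is composed of consecutive vertices. The key combinatorial observation is then that two blocks of consecutive indices sharing the same top index $j$ are automatically nested, so $e_1 \subseteq e_2$ or $e_2 \subseteq e_1$. Invoking the standing reduction made at the start of the section --- that we may assume no hyperedge contains another --- this forces $e_1 = e_2$, which is exactly the claim.

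I do not expect a genuine obstacle here, since the statement is really a corollary of Claim \ref{monochromatic_red} and Corollary \ref{unique}. The only point that deserves a moment's care is the justification that $v_j$ is the maximum-index vertex of each $e_i$: this rests on the fact that redness implies prior processing, so no vertex of $e_i$ can carry an index larger than $j$, and hence the nesting-of-intervals step applies cleanly.
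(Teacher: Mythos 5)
Your proposal is correct and follows essentially the same argument as the paper: Claim \ref{monochromatic_red} places both hyperedges in $E_{h}$ with common head $v_{j}$, Corollary \ref{unique} makes them intervals of consecutive vertices sharing their top index, hence nested, and the standing no-containment reduction forces equality. Your extra justification that $v_{j}$ is the maximum-index vertex of each hyperedge (because red vertices must already have been processed) is a detail the paper leaves implicit, but it is the same proof.
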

	
	\begin{claim}{
		At the end of the algorithm, there cannot be monochromatic red hyperedges.}
	\end{claim}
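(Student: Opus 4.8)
The plan is to argue by contradiction: assume that at the end of the algorithm some hyperedge $g$ is monochromatic red, and then show that the correction step (recoloring $v_{i-1}$ blue) must already have destroyed $g$ before termination. The whole argument hinges on pinning down \emph{when} $g$ first becomes monochromatic red, together with the irreversibility of recolorings noted earlier, namely that the color of a vertex $v_j$ can turn red only in Step $j$ and blue only in Step $j+1$.

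First I would locate the decisive step. Since a vertex can turn red only during its own processing, the highest-index vertex $v_i$ of $g$ must have been colored red in Step $i$. Next I would check that at the instant $v_i$ is colored red, every other vertex of $g$ is already red. Each such vertex $v_j$ of $g$ has index $j<i$ and is red at the end; because its color can turn red only in Step $j$ and blue only in Step $j+1\le i$, it must already be red at the start of Step $i$. For $j\le i-2$ its color is frozen by the end of Step $i-1$; for $j=i-1$, if it were blue after Step $i-1$ it could never become red again, contradicting that it is red at the end. Hence $g$ is monochromatic red the moment $v_i$ is colored red, so the algorithm's test fires and $v_{i-1}$ is recolored blue in Step $i$.

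I would then use the structural results already proved to guarantee that this correction actually touches $g$. By Corollary~\ref{unique} the created monochromatic red hyperedge has consecutive vertices, and since every hyperedge has at least two tail-vertices (so $|g|\ge 3$) with top vertex $v_i$, the vertex $v_{i-1}$ lies in $g$. Thus the correction blues a genuine vertex of $g$. Finally, because the color of $v_{i-1}$ can change to blue only during the processing of $v_i$ and never changes afterwards, $v_{i-1}$ stays blue until termination, contradicting that $g$ is monochromatic red at the end.

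The main obstacle I anticipate is the temporal bookkeeping in the second paragraph---making the "freezing" argument watertight, especially for the boundary vertex $v_{i-1}$, whose color is allowed to flip precisely in Step $i$. One must argue in the correct order: that $v_{i-1}$ is red \emph{before} $v_i$ is colored, so that $g$ genuinely becomes monochromatic and triggers the test, and only \emph{then} is reset to blue. Corollary~\ref{at_most_one} is reassuring here but not strictly needed, since $v_{i-1}\in g$ means that recoloring it blue destroys $g$ regardless of whether any other red hyperedge was created at the same time.
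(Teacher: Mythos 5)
Your proof is correct and takes essentially the same route as the paper's: locate the moment $g$ first becomes monochromatic red (when its highest-indexed vertex $v_i$ is colored red), note that the algorithm's correction step then recolors $v_{i-1}$ blue, and use Corollary \ref{unique} (consecutiveness, with $|g|\geq 3$) plus the irreversibility of recolorings to conclude $v_{i-1}\in g$ stays blue until the end, a contradiction. Your side remark is also accurate: the paper does cite Corollary \ref{at_most_one} at this point, but as you observe it is dispensable, since recoloring $v_{i-1}$ blue destroys $g$ whether or not any other hyperedge became monochromatic at the same step.
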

	\begin{proof}{
		Suppose for a contradiction that $g$ is a monochromatic red hyperedge at the end of the algorithm. We know from Claim \ref{monochromatic_red} that the last vertex of $g$ is its head-vertex, and Corollary \ref{unique} says that the vertices of $g$ are consecutive. The hyperedge $g$ can be monochromatic only by coloring the vertex $h(g)$ red. By Corollary \ref{at_most_one}, at most one monochromatic red hyperedge can be created by coloring vertex $h(g)$ red, which is $g$.
	Under the processing of the vertex $h(g)$ we color the neighbor of $h(g)$ with lower index blue, which is also a vertex of the hyperedge $g$, since the vertices of $g$ are consecutive vertices and $g\in E_{h}$. From now on the neighbor of $h(g)$ with smaller index, $t_{max}(g)$ is left unchanged, so at the end of the algorithm it will be blue, which contradicts the fact that $g$ is a monochromatic red hyperedge at the end of the coloring. We proved that at the end of the algorithm there cannot be monochromatic red hyperedges. }\end{proof}
	
	\begin{claim}{
			At the end of the algorithm, there cannot be monochromatic blue hyperedges.
		}
	\end{claim}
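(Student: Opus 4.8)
The plan is to argue by contradiction: assume some hyperedge $g$ is monochromatic blue at the end of the algorithm and examine $v_i := t_{max}(g)$, the tail-vertex of $g$ with the largest index. Since $t(g)\subseteq\{v_1,\dots,v_i\}$ and $v_i\in t(g)$, the edge $g$ is a legitimate witness in Step $i$, so the behaviour of the algorithm at $v_i$ is tightly constrained. I would first show that $v_i$ must have been coloured red during its own processing. Recall that the final colour of a vertex $v_j$ is fixed once $v_{j+1}$ has been processed, so every vertex of $g$ of index at most $i-2$ already has its final (blue) colour when Step $i$ begins, and the head of $g$, if its index exceeds $i$, is still untouched and hence blue. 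Thus if $v_i$ were never coloured red, the only way $g$ could fail to be monochromatic blue at the decision moment of Step $i$ is that $v_{i-1}\in g$ is red there; but a processed vertex is only recoloured blue as a side effect of colouring the next vertex red, so if $v_i$ stays blue then $v_{i-1}$ stays red to the end, contradicting that $g$ is all blue.

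Hence $v_i$ is coloured red, and since $g$ ends blue, $v_i$ is afterwards recoloured blue, which by the rules can only happen while processing $v_{i+1}$, precisely when colouring $v_{i+1}$ red produces a monochromatic red edge $h$. By Claim \ref{monochromatic_red}, Claim \ref{consecutive} and Corollary \ref{unique}, this edge satisfies $h(h)=v_{i+1}$, has consecutive vertices, and its reverted tail is $t_{max}(h)=v_i$. The heart of the argument is then to inspect $g\cap h$, which contains $v_i$ as a tail-vertex of both edges. If $v_i$ is their only common vertex, this directly violates the hypothesis of the theorem, which forbids two edges meeting in a single common tail-vertex. If they share a further vertex $w\neq v_i$, then $w$ is either a lower-indexed tail of $h$ or the head $v_{i+1}$. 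A lower-indexed tail of $h$ has its colour frozen to red (since $h$ is monochromatic red) before $v_{i+1}$ is processed, so it cannot simultaneously be the blue vertex $w\in g$, a contradiction.

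The remaining and genuinely delicate case is $w=v_{i+1}$: since $v_{i+1}$ has index larger than $t_{max}(g)$, it must be the head of $g$, so $g\in E_h$ and $h(g)=v_{i+1}=h(h)$, and now $g$ and $h$ legitimately share the two vertices $v_i,v_{i+1}$, so the single-intersection hypothesis gives no leverage. To close this case I would exploit that $v_i$ is by now permanently blue. The vertex $v_{i+1}=h(g)$ was coloured red while creating $h$, so for $g$ to end monochromatic blue, $v_{i+1}$ would itself have to be recoloured blue during the processing of $v_{i+2}$; that requires colouring $v_{i+2}$ red to create a monochromatic red edge $h'$ with $h(h')=v_{i+2}$ and $t_{max}(h')=v_{i+1}$. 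By Corollary \ref{unique} the vertices of $h'$ are consecutive, so (having at least two tails) $h'$ contains $v_i$ as a tail-vertex, namely the neighbour of $v_{i+1}$ with smaller index. But $v_i$ was recoloured blue and, since a vertex changes colour at most twice, stays blue; hence $h'$ can never be monochromatic red, $v_{i+1}$ is never recoloured, and the head of $g$ remains red, contradicting that $g$ is monochromatic blue.

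I expect the main obstacle to be exactly this final configuration $h(g)=v_{i+1}=h(h)$, where the hypothesis on one-vertex intersections is silent; the resolution does not come from the intersection condition at all but from combining the consecutiveness supplied by Corollary \ref{unique} with the fact that $v_i$ has already become permanently blue, which blocks the chain of reversions from propagating upward and forces the head of $g$ to stay red. The two preliminary cases are routine once the right vertex $t_{max}(g)$ is isolated, so the whole proof reduces to carefully tracking which vertices are already colour-frozen at each processing step.
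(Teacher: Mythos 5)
Your proof is correct, but its case decomposition is genuinely different from the paper's. The paper anchors on the recoloring event that makes the final blue hyperedge $d$ monochromatic blue: some red edge becomes monochromatic at its head, its largest tail is recolored blue, and the paper then splits on whether this recolored vertex is a tail of $d$ or its head, and on where $t_{max}(d)$ sits relative to the red edge's head. Its hardest case (recolored vertex equal to $h(d)$) is argued backwards: asking why $t_{max}(d)$ was still blue at that moment forces a second red edge $e'$ meeting $d$ in a single common tail-vertex, contradicting the hypothesis. You instead anchor on $t_{max}(g)$ of the final blue edge $g$ itself: you show it is forced to be colored red when processed, hence recolored blue at the next step by a monochromatic red edge $h$ with head $v_{i+1}$ and consecutive vertices, and you split on the intersection $g\cap h$. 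Your hardest case, where $g$ and $h$ share the head $v_{i+1}$, has no counterpart in the paper's proof (there the red edge's head cannot lie in the blue edge, which is entirely blue immediately after the recoloring); you resolve it forwards: recoloring $v_{i+1}$ at step $i+2$ would require a consecutive monochromatic red edge which, by Corollary~\ref{unique} and the at-least-two-tails hypothesis, must contain the now permanently blue $v_i$, so no such edge can arise. Both arguments rest on the same machinery (Claim~\ref{monochromatic_red}, Claim~\ref{consecutive}, Corollary~\ref{unique} and the single-intersection hypothesis), but yours trades the paper's positional case analysis and auxiliary edge $e'$ for systematic use of color freezing (a vertex's color is fixed once its successor has been processed), which makes the contradictions more local and the whole proof somewhat shorter; the paper's version, in exchange, describes more explicitly the global mechanism by which monochromatic blue edges could ever be created. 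Two cosmetic points: you should note that if the needed vertex $v_{i+1}$ (resp.\ $v_{i+2}$) does not exist, the required recoloring is impossible outright, which closes those cases immediately; and you should rename the red edge to avoid writing $h(h)$.
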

	\begin{proof}{ 
		Since all hyperedges will have at least one red vertex during the algorithm, a monochromatic blue hyperedge can only be created by coloring a vertex blue due to a monochromatic red hyperedge $g$ and assume that $g$ has become monochromatic red by coloring a vertex $v$ red. By Corollary \ref{at_most_one}, we know that $g$ is unique and by Claim \ref{monochromatic_red} $g\in E_{h}$, so $h(g)=v$. At the same time, using Corollary \ref{unique}, we know that the vertices of $g$ are consecutive vertices. So we color the neighbor of $h(g)$ with smaller index, which is $t_{max}(g)$ blue due to the hyperedge $g$. 
		Suppose for a contradiction that coloring the vertex $t_{max}(g)$ blue creates a monochromatic blue hyperedge $d$ such that none of its vertices will be red in the future, so it will remain monochromatic blue until the end of the algorithm. \newline 
		If the index of $t_{max}(d)$ is greater than the index of $h(g)$, then after the processing of the vertex $h(g)$ there will be a vertex of $d$ which is colored red, so it will not remain monochromatic blue until the end of the algorithm, because if $d$ is monochromatic blue before processing of the vertex $t_{max}(d)$, then we must color the vertex $t_{max}(d)$ red. We know that $t_{max}(d)$ and $h(g)$ are different vertices because $t_{max}(d)$ is blue and $h(g)$ is red when $h(g)$ is processed.
		\newline 
		Thus we can assume that $t_{max}(d)$ has a smaller index than the vertex $h(g)$.
		If $t_{max}(g)$ is a tail-vertex of the hyperedge $d$, then $g$ and $d$ have exactly one common vertex $t_{max}(g)$, since after $t_{max}(g)$ is colored blue, all vertices of $d$ are blue and all vertices of $g$ are red except for $t_{max}(g)$. Then the vertex $t_{max}(g)$ is the only common vertex of the two hyperedges and it is a tail-vertex of both, which is a contradiction. Thus we can assume that $t_{max}(g)$ is the head-vertex of the hyperedge $d$. Then the index of $h(d)$ is larger than the index of $t_{max}(d)$ since $t_{max}(d)$ has smaller index than $h(g)$ and $h(d)=t_{max}(g)$ is the neighbor of the vertex $h(g)$ with smaller index.
		Using the fact that every hyperedge has at least two tail-vertices, we denote the vertex of hyperedge $g$ with the second highest index by $t_{max-1}(g)$.
		The vertex $t_{max}(d)$ cannot be a neighbor of $h(d)$, since the neighbor of $h(d)=t_{max}(g)$ with smaller index is $t_{max-1}(g)$, which is red when $t_{max}(g)$ is colored blue, as shown in Figure \ref{no_blue_1}.
		\begin{figure}[h!]
			\centering
			\includegraphics[width=0.7\textwidth]{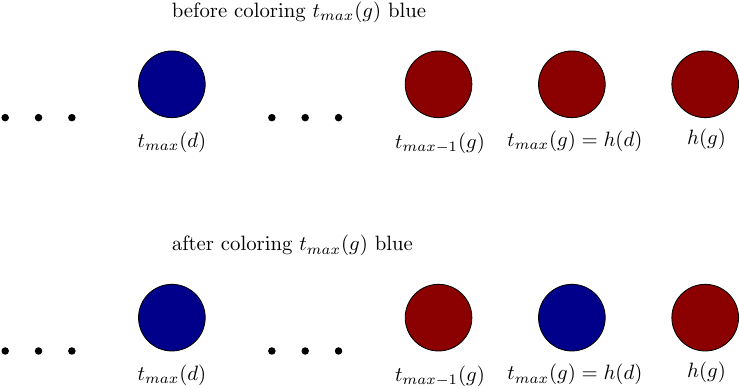}
			\caption{}
			\label{no_blue_1}
			
		\end{figure}
		
		We know that when $h(d)=t_{max}(g)$ is colored blue, $t_{max}(d)$ was blue.
		This is only possible if either we did not color $t_{max}(d)$ red at all, or we colored it red but colored it back to blue due to a monochromatic red hyperedge $e'\in E_{h}$ for which $t_{max}(e')=t_{max}(d)$.
		If we have not colored $t_{max}(d)$ red once, then there is a vertex in the hyperedge $d$ with index less than $t_{max}(d)$ which was red before processing of $t_{max}(d)$ and it must have remained red. This contradicts the fact that by coloring $h(d)$ blue, $d$ became monochromatic blue. So there is only one possibility, we colored the vertex $t_{max}(d)$ red and then colored it back to blue due to a monochromatic red hyperedge $e'\in E_{h}$ as illustrated in Figure \ref{no_blue_2}.
		\begin{figure}[h!]
			\centering
			\includegraphics[width=0.37\textwidth]{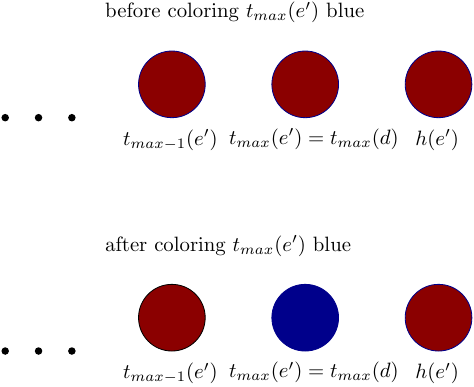}
			\caption{}
			\label{no_blue_2}
			
		\end{figure}
	
		We know that $t_{max}(e')$ and $h(e')$ are consecutive vertices in this order and $t_{max}(e')=t_{max}(d)$.
		Since $t_{max}(d)$ and $h(d)$ are not neighbors, $h(d)\neq h(e')$. When we color the vertex $t_{max}(e')$ blue, the vertices of hyperedge $e'$ with index less than $t_{max}(e')$ are red and the vertices of hyperedge $d$ with index less than $t_{max}(d)=t_{max}(e')$ are blue, so the hyperedges $e'$ and $d$ have exactly one common vertex, which is a tail-vertex of both and this is a contradiction. We proved that at the end of the coloring algorithm, there cannot be monochromatic blue hyperedges.}\end{proof}

	We showed that there is no monochromatic hyperedge in the resulting coloring, so we get a proper 2-coloring of $H$.

\section{A sufficient condition for 3-colorability of directed hypergraphs}

The following theorem gives a sufficient condition for proper 3-colorability from which Theorem \ref{R_{4}} follows directly. Already on five vertices there exists a directed hypergraph which satisfies the condition of Theorem \ref{general_head_tail} and its chromatic number is three. An example is the hypergraph $R$ in the Figure \ref{R}.

\begin{thm}\label{general_head_tail}

	Let $H$ be a directed hypergraph such that in every hyperedge the number of tail-vertices and the number of head-vertices are both at least one. Suppose that if $e_{1},e_{2}\in E(H)$ with $|e_{1}\cap e_{2}|=1$, then the common vertex is either a head-vertex of both hyperedges or a tail-vertex of both. Then $H$ admits a proper 3-coloring.
\end{thm}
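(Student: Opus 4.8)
The plan is to single out one colour class---call it colour $3$---as a vertex set $S$, colour the remaining vertices with two colours using Lovász's Theorem \ref{one_intersection}, and exploit the head/tail hypothesis to guarantee that colour $3$ never produces a monochromatic hyperedge. First, exactly as in the proof of Theorem \ref{one_head}, if $e_1 \subseteq e_2$ then any colouring that breaks $e_1$ also breaks $e_2$, so I may delete $e_2$ and assume that no hyperedge of $H$ contains another. The whole argument then reduces to the following claim: there is a set $S \subseteq V$ such that \textbf{(i)} $S$ contains no hyperedge, and \textbf{(ii)} for every pair $e_1, e_2 \in E(H)$ with $|e_1 \cap e_2| = 1$, at least one of $e_1, e_2$ meets $S$.

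Given such an $S$, I would finish as follows. Colour every vertex of $S$ with colour $3$ and consider the induced hypergraph $H' = H[V \setminus S]$, whose hyperedges are exactly the $e \in E(H)$ with $e \subseteq V \setminus S$. By property \textbf{(ii)}, no two hyperedges of $H'$ can meet in exactly one vertex (such a pair would leave both edges disjoint from $S$), so every pair of hyperedges of $H'$ is either disjoint or meets in at least two vertices. Lovász's Theorem \ref{one_intersection} then gives a proper $2$-colouring of $H'$ with colours $1$ and $2$. The combined colouring is proper for $H$: a hyperedge $e \subseteq V \setminus S$ is non-monochromatic by the $2$-colouring; a hyperedge with $e \cap S \neq \emptyset$ and $e \not\subseteq S$ contains both a colour-$3$ vertex and a vertex of colour $1$ or $2$; and by \textbf{(i)} no hyperedge lies entirely inside $S$. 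Hence $H$ is properly $3$-colourable, and the three colours are genuinely needed by the example $R$.

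The hard part will be the construction of $S$, i.e.\ proving the boxed claim, and this is where the full force of the hypothesis must enter. Breaking the single-intersection pairs by itself is easy---one could just throw every shared vertex into $S$---but the naive choices do not satisfy \textbf{(i)}: the head-head/tail-tail condition is symmetric enough that a tail of some hyperedge $g$ may legitimately be the shared head of another pair, so a whole edge can fall inside a carelessly chosen $S$. The leverage I would use is that every hyperedge carries both a head-vertex and a tail-vertex, which should forbid an entire edge from sitting in a suitably built blocker; concretely, for each conflicting pair one has the freedom to hit either $e_1$ or $e_2$ and to choose \emph{which} vertex to add, and I would make these choices by a greedy/exchange procedure along a vertex ordering, in the spirit of the algorithm behind Theorem \ref{one_head}, maintaining edge-freeness of $S$ as an invariant. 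An alternative route is a two-stage split: use colour $3$ only to destroy the \emph{head-head} single intersections, so that the surviving hypergraph on $V \setminus S$ has none but \emph{tail-tail} single intersections, and then invoke the general analogue of Theorem \ref{I_{0},R_{4}} (tail-tail common vertices imply $2$-colourability) on the remainder; this trades the Lovász step for a tail-tail colourability lemma but leaves the same obstacle---keeping $S$ edge-free---as the crux of the whole proof.
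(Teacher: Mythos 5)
Your reduction step is sound: \emph{if} a set $S\subseteq V$ with properties \textbf{(i)} and \textbf{(ii)} exists, then giving $S$ colour $3$ and applying Theorem \ref{one_intersection} to $H[V\setminus S]$ does produce a proper 3-coloring, exactly as you argue. The genuine gap is that you never prove such an $S$ exists --- you flag this yourself as ``the hard part'' and ``the crux'', and what you offer in its place is a plan, not an argument: an unspecified greedy/exchange procedure ``in the spirit of'' Theorem \ref{one_head}, or alternatively a two-stage split that needs both a general (non-3-uniform, multi-head) analogue of Theorem \ref{I_{0},R_{4}}, which the paper does not provide, \emph{and} the same unconstructed edge-free blocker $S$ restricted to head--head conflicts. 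The missing claim is a nontrivial combinatorial statement of comparable depth to the theorem itself: $S$ must simultaneously be a transversal of all singleton-intersecting pairs and contain no hyperedge, and nothing in the hypothesis hands you such a set. Note also that the existence of $S$ does not even follow from the truth of the theorem: a proper 3-coloring gives three edge-free colour classes, but no single class need satisfy \textbf{(ii)}, since two edges meeting in one tail--tail vertex can both be coloured entirely with the other two colours. So as written the proposal is incomplete at its central step.

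For comparison, the paper's proof never needs an edge-free transversal, which is precisely what lets it avoid your difficulty. It fixes an arbitrary ordering $v_1,\dots,v_n$, splits $E(H)$ into $E_t$ (edges whose largest-index vertex is a tail-vertex) and $E_h$ (edges whose largest-index vertex is a head-vertex), starts with all vertices blue, and makes two greedy passes: the first colours $v_i$ red whenever some still-all-blue edge of $E_t$ has $v_i$ as its largest-index vertex; the second colours $v_j$ green whenever some still-all-blue edge of $E_h$ has $h_{max}(e)=v_j$. After the two passes every edge contains a red or green vertex, and the head--head/tail--tail hypothesis enters only locally: a monochromatic red (respectively green) edge would force two edges meeting in exactly one vertex that is a head of one and a tail of the other, a contradiction. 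There each colour class only has to avoid containing an edge of its own colour --- a much weaker demand than your \textbf{(ii)} --- which is why the direct construction closes while the transversal approach stalls. If you want to salvage your route, the statement to prove is exactly your key claim on the existence of $S$; until then it is an open step, not a lemma.
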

\begin{proof}{
 Let $v_{1},v_{2},...,v_{n}$ be an arbitrary ordering of the vertex set of $H$. For a hyperedge $e\in E(H)$, denote the head-vertex of the hyperedge $e$ with the smallest index by $h_{min}(e)$, the head-vertex with largest index by $h_{max}(e)$, the tail-vertex with smallest index by $t_{min}(e)$ and the tail-vertex with largest index by $t_{max}(e)$. The set of the hyperedges can be divided into two parts according to whether the vertex of the hyperedge with largest index is a head-vertex or a tail-vertex of the hyperedge. Let denote by $E_{h}$ the set of the hyperedges such that its vertex with largest index is the head-vertex of the hyperedge and by $E_{t}$ the set of the hyperedges such that its vertex with largest index is a tail-vertex of the hyperedge.
	Take the colors blue, red and green. Let's start by making all the vertices blue. The coloring algorithm is the following.\newline 
	Step 1: Starting from the vertex with the smallest index, for each vertex $v_{i}$ we check whether there exists a monochromatic blue hyperedge in $E_{t}$ such that its vertex with the largest index is $v_{i}$. If no such hyperedge exists, we simply move on to the vertex $v_{i+1}$. If there is such a hyperedge, we color the vertex $v_{i}$ red and then move on to the vertex $v_{i+1}$.\newline
	Step 2: Starting from the vertex $v_{1}$, for each vertex $v_{j}$, we check whether there exists a monochromatic blue hyperedge $e\in E_{h}$ such that $h_{max}(e)=v_{j}$. If not, we proceed to the vertex $v_{j+1}$. If yes, we color the vertex $v_{j}$ green and then move on to the vertex $v_{j+1}$.\newline
	After Step 1, all hyperedges in $E_{t}$ will have at least one red vertex, and after Step 2, all hyperedges in $E_{h}$ will have at least one green vertex. It follows that after Step 2 there is no monochromatic blue hyperedge. We need to check that there are no monochromatic red and no monochromatic green hyperedges at the end of the coloring.\newline
	\clm{
		There is no monochromatic red hyperedge at the end of the coloring.}\newline
	\prf{
		It is enough to show that after Step 1, there is no monochromatic red hyperedge, because in Step 2 only the color green is used. Suppose for a contradiction that there is a monochromatic red hyperedge after Step 1, denote this hyperedge by $e$. We colored the vertex $h_{min}(e)$ red, hence when we checked the vertex $h_{min}(e)$ there was a hyperedge $f\in E_{t}$, which was monochromatic blue at this moment and $t_{max}(f)=h_{min}(e)$. Since $f\in E_{t}$, the vertex of $f$ with the largest index is $t_{max}(f)$. At the end of the coloring $e$ is monochromatic red, hence before the checking of vertex $h_{min}(e)$, all vertices of hyperedge $e$ with an index less than index of $h_{min}(e)$ have already been colored red. Every vertex of the hyperedge $f$ is blue before checking of the vertex $h_{min}(e)$ and the vertex of $f$ with the largest index is $h_{min}(e)$, since $f\in E_{t}$ and $t_{max}(f)=h_{min}(e)$. It follows that the only common vertex of $e$ and $f$ is $h_{min}(e)$, which is a head-vertex of hyperedge $e$ and a tail-vertex of hyperedge $f$, which is a contradiction. We showed that at the end of the coloring, there is no monochromatic red hyperedge.$\Box$}\newline
	\clm{
		At the end of the coloring there is no monochromatic green hyperedge.}\newline
	\prf{ 
	It is enough to show that Step 2 does not create a monochromatic green hyperedge. Suppose for a contradiction that there exists a monochromatic green hyperedge at the end of the coloring, denoted by $e$. Since the vertex $t_{min}(e)$ is colored green in Step 2, hence there is a hyperedge $f\in E_{h}$ which was monochromatic blue before the checking of vertex $t_{min}(e)$ and its vertex with largest index, namely $h_{max}(f)$ is equal to $t_{min}(e)$. The hyperedge $e$ can be monochromatic green at the end of the coloring if and only if all the vertices of hyperedge $e$ with an index less than index of $t_{min}(e)$ are colored green before checking of the vertex $t_{min}(e)$. The vertex of $f$ with the largest index is $t_{min}(e)$ and the vertex $t_{min}(e)$ was blue before checking of the vertex $t_{min}(e)$. It follows that the only common vertex of edges $e$ and $f$ is $t_{min}(e)$, which is a tail-vertex of $e$ and a head-vertex of $f$, which is a contradiction. There cannot be a monochromatic green hyperedge at the end of the coloring.$\Box$}

	We proved that at the end of the coloring there is not monochromatic hyperedge, so we get a proper 3-coloring of the hypergraph $H$. 
}\end{proof}

\section{Coloring 2 $\rightarrow$ 1 hypergraphs}

In the following, we prove the results for the chromatic number of $2\rightarrow 1$ hypergraphs which avoid one of the two-edge $2\rightarrow 1$ hypergraphs studied by Cameron.

\subsection{Avoiding hypergraphs $H_{2}$, $I_{1}$, $R_{3}$ and $E$}

\begin{proof}[Proof of Claim \ref{H_{2}}]

{We prove that for every integer $k\geq 2$, there exists a $2\rightarrow 1$ hypergraph $H=(V,E)$ such that it does not contain $H_{2}$ as a subhypegraph and the chromatic number of $H$ is at least k. We prove by induction. If $k=2$, then the statement is true, since the hypergraph with three vertices and one hyperedge requires at least 2 colors for a proper 2-coloring. Suppose that for $k$ the statement is true and that $A$ and $B$ are not necessarily different $2\rightarrow 1$ hypergraphs with chromatic number at least k. Consider the hypergraph $H$ for which $V(H)=V(A)\cup V(B)\cup\{x\}$ and $E(H)=E(A)\cup E(B) \cup \{ v_{1} v_{2}\rightarrow x: v_{1}\in V(A), v_{2}\in V(B)\}$. First we check that $H$ does not contain $H_{2}$ as a subhypegraph. Suppose for a contradiction that there exist two different hyperedges $e,f\in E(H)$ for which $e\cap f=\{u,v\}$ and $t(e)=t(f)=\{u,v\}$. It follows from the inductive assumption that $x$ is the head-vertex of at least one of them. Then $x$ is the head-vertex of the other hyperedge also, otherwise there could be at most one common vertex of $e$ and $f$. So $x$ is the head-vertex of both hyperedges and their tail-vertices are the same, which is a contradiction.
	To color the hypergraph $H$, we need at least $k+1$ colors, because if we use only $k$ colors, then using the inductive assumption, for any color both $A$ and $B$ contain a vertex with the given color, hence we can choose a vertex of the same color as $x$ from both $A$ and $B$. Then these two vertices and $x$ would create a monochromatic hyperedge. So $H$ does not contain $H_{2}$ as a subhypegraph and its chromatic number is at least $k+1$ finishing the proof.}
\end{proof}

\begin{proof}[Proof of Claim \ref{I_{1},R_{3},E}]
{ We prove that for every integer $k\geq 2$ there exists a $2\rightarrow 1$ hypergraph $H$ with chromatic number at least k and not containing any of the hypergraphs $I_{1}$, $R_{3}$ and $E$ as subhypergraphs. We prove by induction. If $k=2$, then for example the $2\rightarrow 1$ hypergraph with one hyperedge satisfies the conditions. Suppose that the statement is true for $k$. Using the inductive assumption, consider two $2\rightarrow 1$ hypergraphs with the same number of vertices which do not contain any of the hypergraphs $I_{1}$, $R_{3}$ and $E$ as subhypergraphs and have chromatic number at least $k$. Denote the two hypergraphs by $A$ and $B$, their vertex sets by $V(A)=\{a_{1},a_{2},...,a_{n}\}$ and $V(B)=\{b_{1},b_{2},...,b_{n}\}$. Let $H$ be the $2\rightarrow 1$ hypergraph for which $V(H)=V(A)\cup V(B) \cup \{x_{\sigma}: \sigma \in S_{n}\}$ and $E(H)=E(A)\cup E(B) \cup \{a_{i}b_{\sigma(i)}\rightarrow x_{\sigma}: i\in \{1,2,. ...,n\}, \sigma \in S_{n}\}$, where $S_{n}$ is the set of all permutations of $\{1,2,...,n\}$. \newline Let $e_{1},e_{2}\in E(H)$, for which $|e_{1}\cap e_{2}|=2$. If $e_{1},e_{2}\in E(A)\cup E(B)$, then it follows from the condition that the two common vertex of $e_{1}$ and $e_{2}$ are the tail-vertices of both hyperedges. If $e_{1}\in E(A)\cup E(B)$ and $e_{2}\in E(H)\setminus (E(A)\cup E(B))$, then $|e_{1}\cap e_{2}|\leq 1$, which would be a contradiction. Thus we can assume that $e_{1},e_{2}\in E(H)\setminus (E(A)\cup E(B))$. If $h(e_{1})=h(e_{2})$, then $e_{1}$ and $e_{2}$ must have only one common vertex, their head-vertex, which contradicts the fact that $|e_{1}\cap e_{2}|=2$. So the head-vertex of $e_{1}$ and the head-vertex of $e_{2}$ are different vertices and $h(e_{1}),h(e_{2})\in V(H)\setminus (V(A)\cup V(B))$, $t(e_{1}),t(e_{2})\subseteq (V(A)\cup V(B))$, hence $t(e_{1})=t(e_{2})$.
	It follows that $H$ does not contain any of the hypergraphs $I_{1}$, $R_{3}$ and $E$ as subhypergraphs. \newline We will prove that the chromatic number of $H$ is at least $k+1$. Suppose for a contradiction that there exists a proper $k$-coloring of $H$ and take such a coloring. Since the chromatic numbers of $A$ and $B$ are also at least $k$, we can choose from $V(A)$ and $V(B)$ $k$ vertices, which have different colors for each pair, let these vertices be $a_{i_{1}},a_{i_{2}},. ...,a_{i_{k}}\in V(A)$ and $b_{j_{1}},b_{j_{2}},...,b_{j_{k}}\in V(B)$. We can assume that $a_{i_{r}}$ and $b_{j_{r}}$ have the same color for all $1\leq r\leq k$. Let $\sigma\in S_{n}$ be a permutation such that $\sigma(i_{r})=j_{r}$ for all $1\leq r \leq k$. Consider the number $r'\in\{1,2,...,k\}$ for which the vertex $x_{\sigma}$ and the vertices $a_{i_{r'}}$, $b_{j_{r'}}$ have the same color. Then $a_{i_{r'}}b_{j_{r'}}\rightarrow x_{\sigma}\in E(H)$ is a monochromatic hyperedge, which is a contradiction. Hence the chromatic number of $H$ is at least $k+1$.
}
\end{proof}

\subsection{Avoiding the hypergraphs $I_{0}$ and $R_{4}$}

We prove that avoiding the hypergraph $I_{0}$ is a sufficient condition for proper 4-colorability and that there exists a $2\rightarrow 1$ hypergraph that avoids $I_{0}$ and has a chromatic number three, an example is the following hypergraph $I$. The hypergraph $I$ is illustrated in the Figure \ref{I}, where each row corresponds to a hyperedge, the black points represent the head-vertices of the hyperedges and the white points represent the tail-vertices of the hyperedges.
\begin{eqnarray*}
	V(I)&=&\{v_{1},v_{2},v_{3},v_{4},v_{5}\}\\ E(I)&=&\{v_{1}v_{2}\rightarrow v_{3},
	v_{2}v_{3}\rightarrow v_{4},
	v_{3}v_{4}\rightarrow v_{5},
	v_{4}v_{5}\rightarrow v_{1},
	v_{1}v_{5}\rightarrow v_{2},\\
	& & v_{1}v_{3}\rightarrow v_{4},
	v_{2}v_{4}\rightarrow v_{5},
	v_{3}v_{5}\rightarrow v_{1},
	v_{1}v_{4}\rightarrow v_{2},
	v_{2}v_{5}\rightarrow v_{3}\}	
\end{eqnarray*}

\begin{figure}[h!]
	\centering
	\includegraphics[width=0.3\textwidth]{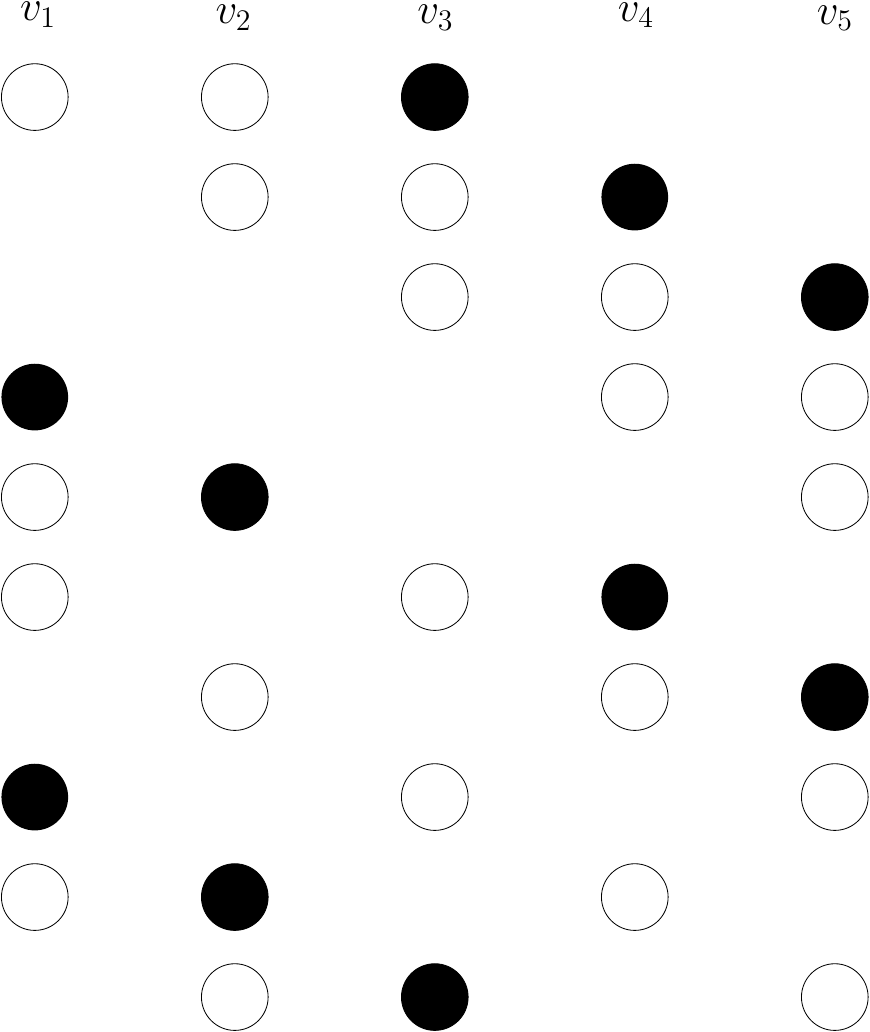}
	\caption{The 3-chromatic hypergraph $I$, which avoids $I_{0}$}
	\label{I}
	
\end{figure}

It is easy to check that $I$ avoids the hypergraph $I_{0}$ and that its chromatic number is three. No matter how the five vertices of the hypergraph $I$ are colored by two colors, there are always three vertices of the same color because of the pigeonhole principle and since any three vertices form a hyperedge, it is a monochromatic hyperedge. However for three colors it is possible to give a coloring in which there are no three vertices of the same color, which gives a proper 3-coloring of $I$. Hence the chromatic number of $I$ is three.

\begin{proof}[Proof of Theorem \ref{I_{0}}]
{
It is enough to consider the case when there are not two hyperedges with the same vertex set. For a given vertex $u$, denote by $E_{h}(u)$ the set of hyperedges whose head-vertex is $u$.
	
	\clm{
	For every $u\in V$, exactly one of the following holds:\newline
	$(a)$ $E_{h}(u)$ is the empty set. \newline
	$(b)$ There exists $v\in V\setminus\{u\}$ such that $v\in e$ for every $e\in E_{h}(u)$.\newline
	$(c)$ $E_{h}(u)=\{vw\rightarrow u, wz\rightarrow u, vz\rightarrow u\}$ for some vertices $v,w,z\in V\setminus\{u\}$.}
	
	\prf{ 
	If $|E_{h}(u)|\leq 1$, then it is easy to check that the statement is true. If $|E_{h}(u)|=2$, then the two hyperedges have a common vertex different from $u$, because otherwise the two hyperedges would have only one common vertex, which is the head-vertex of both, so the two hyperedges would create $I_{0}$. Hence in the case $|E_{h}(u)|=2$ $(b)$ is satisfied. \newline We can assume that $|E_{h}(u)|\geq 3$ and let $e_{1},e_{2},e_{3}$ be three hyperedges with head-vertex $u$. Since $h(e_{1})=h(e_{2})$, there exists a vertex $v\in V$ which is a vertex of both hyperedges. Similarly, for $e_{2}$ and $e_{3}$, there exists a vertex $w\in V$ which is a vertex of both hyperedges. We will prove that if $v$ and $w$ are the same, then $(b)$ is satisfied, and if $v$ and $w$ are different, then $(c)$ is satisfied.\newline 
	Suppose that $v$ and $w$ are equal. Then $v$ is a vertex of each of the hyperedges $e_{1}, e_{2}$ and $e_{3}$. If $|E_{h}(u)|=3$, we are done, since $(b)$ is satisfied. If $|E_{h}(u)|\geq4$, then take an arbitrary hyperedge $e_{4}$ such that $e_{4}\in E_{h}(u)\setminus \{e_{1},e_{2},e_{3}\}$. Since $h(e_{4})=h(e_{i})$ is satisfied for $i=1,2,3$, the hyperedge $e_{4}$ has a common vertex with each of the hyperedges $e_{1}, e_{2}$ and $e_{3}$ which is different from $u$. The third vertex of hyperedges $e_{1},e_{2}$ and $e_{3}$ different from $u$ and $v$ are pairwise different, so $|(e_{1}\cup e_{2} \cup e_{3})\setminus \{u,v\}|=3$. It follows that $e_{4}$ can have a common vertex different from $u$ with each of the hyperedges $e_{1},e_{2}$ and $e_{3}$ if and only if $v\in e_{4}$. Thus we proved that for every $e\in E_{h}(u)$ $v\in e$ holds. \newline
	Suppose that the vertex $v$ and vertex $w$ are different. Then the two tail-vertices of $e_{2}$ are $v$ and $w$. Let $z$ denote the third vertex of the hyperedge $e_{1}$ different from $u$ and $v$. Since $h(e_{1})=h(e_{3})$, the two hyperedges have a common vertex different from $u$. One of the tail-vertices of $e_{3}$ is $w$. If $z=w$, then $e_{1}$ and $e_{2}$ are equal. If $z$ is the common vertex of $e_{1}$ and $e_{3}$ which is different from $u$, then $\{e_{1},e_{2},e_{3}\}=\{vz\rightarrow u, vw\rightarrow u, wz\rightarrow u\}$. If $|E_{h}(u)|=3$, then we proved that $(c)$ is satisfied. Suppose for a contradiction that $|E_{h}(u)|\geq4$ and let $f\in E_{h}(u)\setminus \{e_{1},e_{2},e_{3}\}$. Since $u$ is the head-vertex of hyperedges $e_{1},e_{2},e_{3}$ and $f$, the hyperedge $f$ has a common vertex different from $u$ with each of the hyperedges $e_{1},e_{2}$ and $e_{3}$. Hence $f$ contains at least two of the vertices $v,w$ and $z$, but then $f$ is equal to one of the hyperedges $e_{1}, e_{2}$ and $e_{3}$, which is a contradiction. So we have seen that if $v$ and $w$ are different, then the case $(c)$ is satisfied. $\Box$
	}\newline
	
	If $E_{h}(u)$ is the empty set, then take an arbitrary vertex $v$ different from $u$ and add to the hypergraph the hyperedge $wv\rightarrow u$ for all $w\in V\setminus\{u,v\}$. If for $u$ the case $(b)$ is satisfied, then there exists $v\in V$ for which $v\in e$ for all $e\in E_{h}(u)$. Then, add to the hypergraph all possible hyperedges such that its head-vertex $u$ and $v$ is one of its tail-vertices. It can be easily checked that the resulting hypergraph still avoids $I_{0}$, so the following statement holds. \newline
	
	\clm{
	We can assume that for every $u\in V$ exactly one of the following holds:\newline
	$(a)$ $E_{h}(u)=\{vw\rightarrow u, wz\rightarrow u, vz\rightarrow u\}$ for some vertices $v,w,z\in V\setminus\{u\}$.\newline
$(b)$ $E_{h}(u)=\{vw\rightarrow u: w\in V\setminus \{u,v\}\}$ for some vertex $v\in V\setminus\{u\}$.}\newline
	
	We continue with the proof of the theorem.
	Denote by $V_{a}$ the vertices $u\in V$ for which $E_{h}(u)=\{vw\rightarrow u, wz\rightarrow u, vz\rightarrow u\}$ for corresponding vertices $v,w,z\in V\setminus\{u\}$ and by $V_{b}$ the vertices $u$ for which $E_{h}(u)=\{vw\rightarrow u: w\in V\setminus \{u,v\}\}$ for a corresponding vertex $v\in V$.
Let $v_{1},v_{2},...,v_{n}$ be an arbitrary ordering of elements of the vertex set $V$. For a hyperedge $e\in E(H)$, let $h(e)$ be the head-vertex of $e$, $t_{1}(e)$ the tail-vertex of $e$ with smaller index and $t_{2}(e)$ the tail-vertex of $e$ with larger index. The set of the hyperedges can be divided into three parts with respect to the order of their head-vertex and tail-vertices. Denote those hyperedges such that its head-vertex is the vertex of the hyperedge with i-th  smallest index by $E_{i}$. \newline Take the colors blue, red, green and yellow. Let all vertices initially be blue. In the following, we give a proper 4-coloring.\newline
	Step 1: Starting from the vertex with the smallest index, for each vertex $v_{i}$ we check if there exists a hyperedge from $E_{3}$ such that its head-vertex is $v_{i}$ and it is monochromatic blue. If no such hyperedge exists, we simply move on to the next vertex. If so, we color the vertex $v_{i}$ red and move on to the vertex $v_{i+1}$.\newline
	Step 2: Starting from the vertex with the largest index and going backwards for each vertex $v_{i}$, we check if there is a hyperedge from $E_{1}$ such that has no green vertex. If no such hyperedge exists, we move on to the vertex $v_{i-1}$. If there is such a hyperedge, we color vertex $v_{i}$ green and move on to the next vertex.\newline
	Step 3: Starting from the vertex with the smallest index, for each vertex $v_{i}$, we check whether there exists a monochromatic blue hyperedge from $E_{2}$ such that its head-vertex is $v_{i}$. If not, we move on to the vertex $v_{i+1}$. If such a hyperedge exists, then we color the vertex $v_{i}$ yellow and we move on to the next vertex.\newline
	
	\clm{
		At the end of Step 1, there is no monochromatic red hyperedge in $E$ and there is no monochromatic blue hyperedge in $E_{3}$.
	}\newline
	\prf{ 
	It is easy to check that in step 1, at least one vertex of each monochromatic blue hyperedge in $E_{3}$ is colored red, so there cannot be a monochromatic blue hyperedge in $E_{3}$ at the end of step 1. Suppose for a contradiction that there exists a monochromatic red hyperedge at the end of step 1, denote such a hyperedge by $e$. Since $e$ is monochromatic red at the end of step 1, 
	we have already colored the vertices of hyperedge $e$ with index less than the index of $h(e)$ red before checking the head-vertex of $e$. The vertex $h(e)$ is colored red, so there exists a hyperedge $f\in E_{3}$ such that its head-vertex is $h(e)$ and it was monochromatic blue before checking the vertex $h(e)$. Since $f$ has no vertex with index greater than the index of $h(e)$, and all vertices of $f$ are blue and all vertices of $e$ with index less than the index of $h(e)$ are red before checking of vertex $h(e)$, hence the only common vertex of $e$ and $f$ is $h(e)$, which is the head-vertex of both hyperedges, and this contradicts the fact that $H$ does not contain $I_{0}$ as a subhypergraph. So there is no monochromatic red hyperedge at the end of step 1. $\Box$.
	}\newline
	
	\clm{
	At the end of Step 2, there cannot be monochromatic hyperedges with color red or green in $E$ and there are no monochromatic blue hyperedges in $E_{1}\cup E_{3}$.}
	
	\prf{ 
	It is easy to check that a hyperedge from $E_{1}\cup E_{3}$ cannot be monochromatic blue, since at the end of step 1 all hyperedges from $E_{3}$ have at least one red vertex, and in step 2 at least one vertex of all hyperedges from $E_{1}$ are colored green. We know that at the end of step 1 there is not monochromatic red hyperedge and in step 2 we use only the color green, so monochromatic red hyperedges can not be created. It is enough to show that there is not monochromatic green hyperedge at the end of step 2. The proof is analogous to the proof of the previous claim. $\Box$}\newline
	
	\clm{
	 There is no monochromatic hyperedge at the end of the Step 3.}
	
	\prf{ 
	It follows from the previous claim that if a hyperedge is monochromatic before step 3, it can only be blue and be in $E_{2}$. By the definition of step 3, it follows that every such hyperedge will have a yellow vertex. Hence it is enough to show that no monochromatic yellow hyperedge is created in step 3. Since only vertices with color blue are colored yellow and every hyperedge from $E_{1}\cap E_{3}$ has a vertex with a color different from color blue at the end of step 2, a monochromatic yellow hyperedge from $E_{1}\cap E_{3}$ can not be created. We are left to prove that a hyperedge in $E_{2}$ cannot be monochromatic yellow at the end of the coloring either.
	Suppose for a contradiction that there exists a monochromatic yellow hyperedge from $E_{2}$ and let $e$ be such a hyperedge. The hyperedge $e$ can be monochromatic yellow at the end of the coloring if and only if $t_{1}(e)$ is colored yellow before checking the vertex $h(e)$. Since the vertex $h(e)$ is colored yellow, there exists a hyperedge $f\in E_{2}$ such that $h(f)=h(e)$ and all vertices of $f$ are blue before checking the vertex $h(e)$. Since the head-vertex of $e$ and the head-vertex of $f$ is the same, there is an other common vertex different from $h(e)$, which can only be $t_{2}(e)$, because $t_{1}(e)$ is yellow and $t_{1}(f)$ is blue before checking the vertex $h(e)$. Thus, the tail-vertices of $e$ and $f$ with higher index are the same. There are two cases, $h(e)\in V_{a}$ or $h(e)\in V_{b}$. If $h(e)\in V_{a}$, then $g=[t_{1}(f)t_{1}(e)\rightarrow h(e)]\in E(H)$. We know that $t_{1}(e)$ is yellow, $t_{1}(f)$ and $h(e)$ are blue before checking the vertex $h(e)$, which implies that $g$ was monochromatic blue before step 3. The two tail-vertices of $g$, $t_{1}(e)$ and $t_{1}(f)$, have smaller indices than index of vertex $h(e)$, which is the head-vertex of $g$, so $g\in E_{3}$. Hence $g$ is a hyperedge in $E_{3}$, which was monochromatic blue before step 3, this is a contradiction.
		Thus we can assume that $h(e)\in V_{b}$. Since $v_{n}$ cannot be the head-vertex of a hyperedge from $E_{2}$, the color of $v_{n}$ will never be yellow, which implies that $t_{2}(e)$ is a vertex different from $v_{n}$. The hyperedge $g'=[t_{2}(e)v_{n}\rightarrow h(e)]\in E(H)$, because $h(e)\in V_{b}$. The vertex $v_{n}$ cannot be the head-vertex of a hyperedge from $E_{1}$, hence it cannot be green at the end of step 2. We know that both $t_{2}(f)$ and $h(e)$ were blue at the end of step 2. This implies that $g'$ is a hyperedge in $E_{1}$ such that it does not have a vertex with color green at the end of step 2, which is a contradiction. We showed that there is not monochromatic yellow hyperedge at the end of the coloring. $\Box$
	}\newline
	This shows that the given coloring is a proper 4-coloring of $H$.}
\end{proof}

The following example shows that there is a $2\rightarrow 1$ hypergraph satisfying the condition of Theorem \ref{R_{4}} and has a chromatic number three.

\begin{eqnarray*}
	V(R)&=&\{v_{1},v_{2},v_{3},v_{4},v_{5}\}\\ E(R)&=&\{
	v_{2}v_{3}\rightarrow v_{1},
	v_{2}v_{4}\rightarrow v_{3},
	v_{3}v_{4}\rightarrow v_{5},
	v_{4}v_{5}\rightarrow v_{1},
	v_{1}v_{2}\rightarrow v_{5},\\
	& & v_{1}v_{4}\rightarrow v_{2},
	v_{3}v_{5}\rightarrow v_{2},
	v_{1}v_{3}\rightarrow v_{4},
	v_{2}v_{5}\rightarrow v_{4},
	v_{1}v_{5}\rightarrow v_{3}\}	
\end{eqnarray*}
The hypergraph $R$ is illustrated in \ref{R}, each row represents a hyperedge, the black points are the head-vertices of the hyperedges, and the white points are the tail-vertices of the hyperedges.
\begin{figure}[h!]
	\centering
	\includegraphics[width=0.3\textwidth]{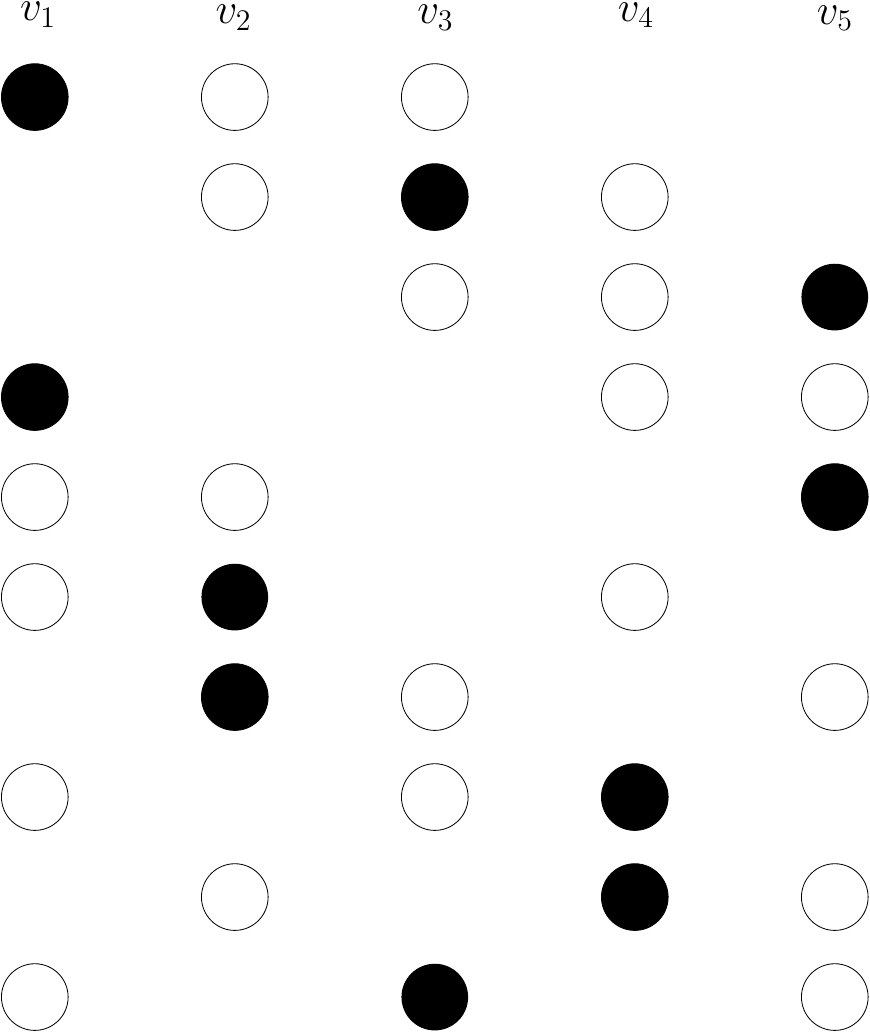}
	\caption{The 3-chromatic hypergraph $R$, which avoids $R_{4}$}
	\label{R}
	
\end{figure}
It is easy to check that the hypergraph $R$ avoids the hypergraph $R_{4}$ as a subhypergraph and its chromatic number is three.

We prove that avoiding hypergraphs $I_{0}$ and $R_{4}$ is a sufficient condition for proper 2-colorability.

\begin{proof}[Proof of Theorem \ref{I_{0},R_{4}}]
	{
	Take the red and blue colors and let $v_{1},v_{2},...,v_{n}$ be an arbitrary ordering of vertices of $H$. From now on let the head-vertex of a hyperedge $e$ be denoted by $h(e)$, its tail-vertex with smaller index by $t_{1}(e)$, its tail-vertex with larger index by $t_{2}(e)$.
	\newline
 Let the color of all vertices be blue.
	Starting from the vertex $v_{1}$, for each vertex $v_{i}$, we check whether there exists a hyperedge such that its head-vertex is $v_{i}$ and the hyperedge is monochromatic blue. If no such hyperedge exists, we move on to the vertex $v_{i+1}$. If there is such a hyperedge, then we color the vertex $v_{i}$ red and then we move on to the vertex $v_{i+1}$. Note that in the resulting coloring, all hyperedges will have red vertices, so no hyperedge can be monochromatic blue.
	 It is enough to show that there is no monochromatic red hyperedge. Suppose for a contradiction that there exists a hyperedge $e$ which is monochromatic red at the end of the coloring. Since each vertex is colored red at most once, the only way to make the hyperedge $e$ monochromatic red is to color all vertices red. So, when we color the vertex of the hyperedge $e$ with the largest index red, the other two vertices of the hyperedge $e$ are already colored red. Let $v$ denote the vertex of the hyperedge $e$ with the largest index.
	 Since the vertex $v$ was colored red, there was a hyperedge $f\in E(H)$ such that $h(f)=v$ and $f$ was monochromatic blue before checking the vertex $v$ . Assume first that $v$ is the head-vertex of $e$, then the only common vertex of $e$ and $f$ is $v$, since before checking the vertex $v$, the other two vertices of the hyperedge $e$ are colored red, the other two vertices of the hyperedge $f$ are colored blue. Hence the hyperedges $e$ and $f$ have only one common vertex, which is the head-vertex of both hyperedges, and this is a contradiction. Thus we can assume that $v$ is a tail-vertex of $e$. Then $v$ is also their only common vertex, which is the tail-vertex of one hyperedge and the head-vertex of the other hyperedge, which is also a contradiction. So there is no monochromatic red hyperedge. We showed that the coloring is a proper 2-coloring of $H$.
}\end{proof}

\section{Open questions}
While we proved that Conjecture \ref{conjecture} is true for directed hypergraphs with all hyperedges having size at least three and exactly one head-vertex, the conjecture is still open in the case a hyperedge can have more head-vertices. We know that avoiding the hypergraph $I_{0}$ is a sufficient condition for proper $4$-colorablity and that there exists a $2\rightarrow 1$ which avoids $I_{0}$ and its chromatic number is three. It is open whether avoiding the hypergraph $I_{0}$ guarantees proper $3$-colorability.

\section{Acknowledgment}
I am grateful to Balázs Keszegh for the valuable discussions, helpful remarks and for reading this manuscript.

\end{document}